\documentclass[10pt,a4paper]{article}
\usepackage[a4paper]{geometry}
\usepackage{amssymb,latexsym,amsmath,amsfonts,amsthm}
\usepackage{graphicx}

\usepackage{epsfig}

\usepackage{epsfig}


\usepackage{tikz}
\usepackage{amssymb}
\usepackage{mathrsfs}
\usepackage{aliascnt}
\usepackage[colorlinks,linkcolor=blue,citecolor=blue]{hyperref}

\usepackage{pst-node}
\usepackage{tikz-cd}
\usepackage{mathtools}
%
%
%
%
%
%



\newcommand{\supp}{\mathrm{supp}}

\newtheorem{theorem}{Theorem}[section]
\newtheorem{lemma}[theorem]{Lemma}

\newtheorem{corollary}[theorem]{Corollary}

\newtheorem{HP}{Hermite-Pad\'e approximation  problem}

\theoremstyle{definition}
\newtheorem{definition}[theorem]{Definition}

\theoremstyle{remark}

\numberwithin{equation}{section}

\hyphenation{pa-ra-me-tri-za-tion}

\usepackage{scalerel,stackengine}
\stackMath
\newcommand\reallywidehat[1]{%
\savestack{\tmpbox}{\stretchto{%
  \scaleto{%
    \scalerel*[\widthof{\ensuremath{#1}}]{\kern-.6pt\bigwedge\kern-.6pt}%
    {\rule[-\textheight/2]{1ex}{\textheight}}
  }{\textheight}%
}{0.5ex}}%
\stackon[1pt]{#1}{\tmpbox}%
}
\parskip 1ex

\title{Mixed type Hermite-Pad\'e approximation inspired by the Degasperis-Procesi equation}

\date{\today}

\author{G. L\'opez Lagomasino\footnotemark[1], S. Medina Peralta\footnotemark[2],  and J. Szmigielski\footnotemark[3]}

\begin{document}

\maketitle
\renewcommand{\thefootnote}{\fnsymbol{footnote}}
\footnotetext[1]{Departamento de
Matem\'aticas, Universidad Carlos III de Madrid, Avda. Universidad
30, 28911 Legan\'es, Madrid, Spain. email: lago\symbol{'100}math.uc3m.es. This author was supported
by research grant  MTM2015-65888-C4-2-P of Ministerio de Econom\'{\i}a, Industria y Competitividad.}
\footnotetext[2]{Instituto de Matem\'atica y F\'{\i}isica. Universidad de Talca.  Camino Lircay S/N, Campus Norte,  Talca, Chile. This author received support from research grant Conicyt Fondecyt/Postdoctorado/ Proyecto 3170112}.
\footnotetext[3]{Department of Mathematics and Statistics, University of Saskatchewan, Saskatoon, Canada. Partially supported by NSERC.}

\begin{abstract}
In this work we present new results on the convergence of diagonal sequences of certain mixed type Hermite-Pad\'e approximants of a Nikishin system.
The study is motivated by a
mixed Hermite-Pad\'e approximation scheme used in the construction of solutions of a Degasperis-Procesi peakon problem and germane to the analysis of the inverse spectral problem for the discrete cubic string.
\end{abstract}
\medskip

\noindent
\textbf{Keywords:}  Nikishin systems,  Hermite-Pad\'e approximation, biorthogonal polynomials,  Degasperis-Procesi equation.

\medskip

\noindent
\textbf{AMS classification:} Primary 30E10, 42C05; Secondary 41A20.

\maketitle

\section{Introduction}
\label{section:intro}
This article deals with the convergence of sequences of a  mixed type  Hermite-Pad\'e approximation. Hermite-Pad\'e approximation was introduced by Ch. Hermite \cite{Her} for proving the trascendence of the number $e$ and subsequently it has been used in other number theory related problems (for a survey of such applications see \cite{walter}). In recent years, they have received increasing attention because of their applicability in other areas such as  non-intersecting
brownian motions theory\cite{Daems}, the study of multiple orthogonal polynomials ensembles \cite{Kuij}, random matrix theory \cite{Ble,Kuij2}, and in the solution of the Degasperis-Procesi (DP) differential equation (see, for example, \cite{2,3,jacek}). This paper is motivated in an approximation problem relevant to  the solution of the DP equation.

\medskip

\subsection{The Degasperis-Procesi equation and an approximation problem.}
\noindent

In \cite{jacek}, the authors study the following partial differential equation
\begin{equation}\label{PDE}
u_t-u_{xxt}+(b+1)uu_{x}=bu_xu_{xx}+uu_{xxx}, \quad \quad (x,t)\in \mathbb{R}^{2}.
\end{equation}
It is known that this equation  is  completely integrable if and only if $b = 2$ or $b = 3$. The case $b = 2$
is the well-known Camassa-Holm (CH) shallow water equation \cite{4}. The case $b = 3$ is
the Degasperis-Procesi (DP) equation, found by Degasperis and Procesi \cite{7}, and  subsequently  shown by
Degasperis, Holm, and Hone \cite{5, 6} to be integrable.
All equations in the family (\ref{PDE}) admit (in a weak sense) a type of non smooth
solutions called multipeakons (peakon = peaked soliton). These take the form of a train
of peak-shaped interacting waves,
\begin{equation}\label{peakons}
u(x, t) =\sum_{i=1}^{n}m_i(t)e^{-|x-x_i(t)|}.
\end{equation}
This \textit{Ansatz} is then substituted into \eqref{PDE}, resulting
in a system of ordinary differential equations on unknown smooth functions
$x_i(t)$ and $m_i(t)$.

\medskip

To solve this system the authors of \cite{jacek} consider a certain boundary value problem, called the
discrete cubic string. This problem is the main tool to obtain the explicit formulas, but it is also an interesting problem in its own right from the point of view of operator theory.  By the
forward cubic string problem we mean the following third-order spectral problem: for a given positive measure $g(y)$,
determine the eigenvalues $z$  for which nontrivial  continuous eigenfunctions
$\phi(y)$ satisfy
\begin{equation}\label{CSP}
\phi_{yyy}(y) = zg(y) \phi(y), \quad \quad \text{ for } y\in (-1, 1), \quad \quad \phi(-1) = \phi_{y}(-1) = 0, \quad \phi(1) = 0,
\end{equation}
 in a distributional sense.  If the
singular support of $g(y)$ contains the endpoints then the
values of $\phi$ and its derivatives at $-1,1$ are replaced with
the left hand, right hand limits respectively.

\medskip

This spectral problem is proved in  \cite{jacek} to be equivalent under a change of variables to the one appearing
in the DP Lax pair.  It can be viewed as a non-self-adjoint generalization of the well-known (self-adjoint)
string equation
\begin{equation}\label{SP}
\phi_{yy}= zg(y)\phi(y) \quad \quad \text{ for } y\in (-1, 1), \quad \quad
\phi(-1) = 0,\quad  \phi(1) = 0,
\end{equation}
studied by M.G. Krein in the 50's \cite{krein-string}.

\medskip

The discrete case arises when $g(y)$ is a discrete measure; in other words $g =\sum_{i=1}^{N}g_i\delta_{y_i}$.  Since
the point masses $g_i$ are placed at positions $y_i$ and there are no
masses between the points the eigenfunctions
are piecewise linear in $y$ for the ordinary string, and piecewise quadratic for the cubic
string.

\medskip

The discrete (ordinary) string plays a crucial role in finding the general $n$-peakon
solution for the CH equation \cite{2,3}. The inverse spectral problem consists in determining
the positions $y_i$ and masses $g_i$ given the eigenfrequencies and suitable additional information
about the eigenfunctions (encoded in the spectral measure of the string or,
equivalently, in its Cauchy transform). The solution presented in \cite{2} relies on the work of
T. Stieltjes \cite{Sti}, as well as its interpretation by
M.G. Krein \cite{krein-string} as  a special case of the inverse string problem; it involves
Stieltjes continued fractions, the classical moment problem, Pad\'e approximation, and orthogonal polynomials.

\medskip

The remarkable fact is that in both cases (CH and DP) the associated spectral problems have a finite positive
spectrum; this is not so surprising in the case of the ordinary string which is a self-adjoint
problem, but it is quite unexpected for the cubic string, since the problem is non-self-adjoint and
there is no \textit{a priori} reason for the spectrum to even be real, much less positive.

\medskip

Though the inverse cubic string problem is not the main concern of this paper, in \autoref{inverse} we will show how its solution is connected with an approximation problem which we will present shortly using a terminology and notation  more convenient for our purpose.

\medskip

Given two measures $\sigma_1, \sigma_2$ whose supports are contained on the real line and have at most one point in common, suppose that the following functions are well defined in the complement of the support of $\sigma_1$
\[\widehat{s}_{1,1}(z) = \int \frac{d\sigma_1(x)}{z-x}, \qquad \widehat{s}_{1,2}(z) = \int  \frac{d\sigma_1(x)}{z-x} \int \frac{d\sigma_2(x)}{x-y}\]
The pair $(\widehat{s}_{1,1},\widehat{s}_{1,2})$ constitutes what is known as a Nikishin system of functions (of order 2). Interchanging the roles of $\sigma_1,\sigma_2$ we can define in the same manner the Nikishin system $(\widehat{s}_{2,2},\widehat{s}_{2,1})$.

\medskip

\begin{HP}\label{Nik2}
Consider the systems $(\widehat{s}_{1,1},\widehat{s}_{1,2})$ and  $(\widehat{s}_{2,2},\widehat{s}_{2,1})$. Then for each $n \in \mathbb{N},$ we seek polynomials $(a_{n,0},a_{n,1}, a_{n,2}),$ not all identically equal to zero, with $\deg a_{n,0}\leq n-1$, $\deg a_{n,1}\leq n-1,$ and $\deg a_{n,2}\leq n$,  that satisfy:
\begin{align}
\left(a_{n,0}-a_{n,1}\widehat{s}_{1,1}+a_{n,2}\widehat{s}_{1,2}\right)(z)=\mathcal{O}(1/z^{n+1}) \label{JLS1},\\
\left(a_{n,1}-a_{n,2}\widehat{s}_{2,2}\right)(z)=\mathcal{O}(1/z). \label{JLS2}
\end{align}
\end{HP}

\medskip

In the inverse cubic string problem, the measures $\sigma_1,\sigma_2$ are connected with the Weyl functions of the spectral problem \eqref{CSP}. In the situation considered in \cite{2,3} the measures are discrete. With the degree of generalization presented here this problem was proposed in \cite{Bertola:CBOPs}.

\medskip

In the present paper, in Theorem \ref{equiv} we study the existence and uniqueness of the solution of an analogous approximation problem as well as the location of the zeros of the Nikishin polynomials for systems of order $m\geq 2$, biorthogonality properties satisfied by the polynomials $a_{n,m}$ are given in Theorem \ref{TBIO}, and the limit behavior of the Nikishin polynomials is described in Theorem \ref{CTPm}.

\subsection{Nikishin systems.} \label{subsec:NS} In Hermite-Pad\'e approximation the object of approximation is a system of analytic functions. We restrict our attention to so called Nikishin systems which contain, in particular, the functions appearing in equations (\ref{JLS1}) and (\ref{JLS2}). Nikishin systems were first introduced in \cite{Nik}. We will use a more general definition given in \cite{FL4} which is more appropriate for our purpose.

\medskip

In the sequel $\Delta$ denotes an interval contained in the real axis. By $\mathcal{M}(\Delta)$ we denote the class of all Borel measures $s$ with constant sign whose support  consists on infinitely many points and is    contained in $\Delta$ such that  $x^\nu \in L_1(s)$ for all $\nu \in \mathbb{Z}_+$. We denote the Cauchy transform of $s$ by
\[ \displaystyle \widehat{s}(z) = \int\frac{d s(x)}{z-x}.
\]
We have
\begin{equation}\label{expansion}
\displaystyle \widehat{s}(z)  \sim  \sum_{j=0}^{\infty} \frac{c_j}{z^{j+1}}, \qquad c_j = \int x^j ds(x).
\end{equation}
If  the support  of $s$, $\mbox{supp}(s)$, is bounded  the series is convergent in a neighborhood of $\infty$; otherwise, the expansion is asymptotic at $\infty$. That is, for each $k \geq 0$
\[ \lim_{z \to \infty} z^{k+1}\left(\widehat{s}(z) - \sum_{j=0}^{k-1} \frac{c_j}{z^{j+1}}\right) = c_k,
\]
where the limit is taken along any curve which is non tangential to $\mbox{supp}(s)$ at $\infty$.

\medskip

Now, let $\Delta_{\alpha}, \Delta_{\beta}$ be two intervals contained in the real line with at most one common point. Take $\sigma_{\alpha} \in {\mathcal{M}}(\Delta_{\alpha})$ and $\sigma_{\beta} \in {\mathcal{M}}(\Delta_{\beta})$ such that $\widehat{\sigma}_{\beta} \in L_1(\sigma_{\alpha})$. Then, using the  differential notation, we define a third measure $\langle \sigma_{\alpha},\sigma_{\beta} \rangle$ as follows
\[d \langle \sigma_{\alpha},\sigma_{\beta} \rangle (x) := \widehat{\sigma}_{\beta}(x) d\sigma_{\alpha}(x).\]
In consecutive products of measures such as $\langle \sigma_{\gamma},  \sigma_{\alpha},\sigma_{\beta} \rangle :=\langle \sigma_{\gamma}, \langle \sigma_{\alpha},\sigma_{\beta} \rangle \rangle,$ we assume not only that $\widehat{\sigma}_{\beta} \in L_1(\sigma_{\alpha})$ but also $\langle \sigma_{\alpha},\sigma_{\beta} \widehat {\rangle} \in L_1(\sigma_{\gamma})$, where $\langle \sigma_{\alpha},\sigma_{\beta} \widehat{\rangle}$ denotes the Cauchy transform of $\langle \sigma_{\alpha},\sigma_{\beta}  {\rangle}$.

\medskip

Consider a collection  $\Delta_j, j=1,\ldots,m,$ of intervals such that
\[ \Delta_j \cap \Delta_{j+1} = \emptyset, \qquad \mbox{or} \quad \Delta_j \cap \Delta_{j+1} = \{x_{j,j+1}\}, \quad j=1,\ldots,m-1,
\]
where $x_{j,j+1}$ is a single point. Let $(\sigma_1,\ldots,\sigma_m)$ be a system of measures such that $\mbox{\rm Co}(\mbox{\rm supp} (\sigma_j)) = \Delta_j, \sigma_j \in {\mathcal{M}}(\Delta_j), j=1,\ldots,m,$  where $\mbox{\rm Co}(E)$ denotes the convex hull of the set $E$. Denote
\begin{equation*}
\langle \sigma_{j},\ldots,\sigma_k  {\rangle} := \langle \sigma_j,\langle \sigma_{j+1},\ldots,\sigma_k\rangle\rangle\in {\mathcal{M}}(\Delta_j),  \qquad  1 \leq j < k\leq m.
\end{equation*}
If $\Delta_j \cap \Delta_{j+1} = \{x_{j,j+1}\}$ we also assume that $x_{j,j+1}$ is not a mass point of either $\sigma_j$ or $\sigma_{j+1}$.

\begin{definition} With the notation above, we say that ${\bf s}=(s_{1,1},\ldots,s_{1,m}) = {\mathcal{N}}(\sigma_1,\ldots,\sigma_m)$, where
\begin{equation} \label{eq:ss}
s_{1,1} = \sigma_1, \quad s_{1,2} = \langle \sigma_1,\sigma_2 \rangle, \ldots \quad , \quad s_{1,m} = \langle \sigma_1, \sigma_2,\ldots,\sigma_m  \rangle.
\end{equation}
is the \textit{Nikishin system} of measures generated by $(\sigma_1,\ldots,\sigma_m)$. The corresponding Nikishin system of functions will be denoted  by ${\bf \widehat{s}}=\left(\widehat{s}_{1,1},\ldots,\widehat{s}_{1,m}\right)$, where $\widehat{s}_{1,j}$ is the Cauchy transform of $s_{1,j}$.
\end{definition}

This definition extends the one given in  \cite{Nik} by allowing the generating measures to have unbounded support and/or have consecutive $\Delta_j$ with a common endpoint. That the generating measures have infinite support is not required for the definition of Nikishin systems; however, this condition is frequently used in the proof of the main results. If a measure has discrete support its Cauchy transform reduces to a rational function and the arguments used in the proof of some results must be modified. Sometimes the statement of the results themselves become obvious. For example, if that is the case in equations (\ref{JLS1}) and (\ref{JLS2}), the left hand sides of those relations become identically equal to zero, for all $n$ larger than the number of mass points, and the question about convergence of the approximants is trivial.

\medskip

In what follows, for $1\leq j\leq k\leq m$, we denote
\begin{equation} \label{eq:sjk}
s_{j,k} := \langle \sigma_j,\sigma_{j+1},\ldots,\sigma_k \rangle, \qquad s_{k,j} := \langle \sigma_k,\sigma_{k-1},\ldots,\sigma_j \rangle.
\end{equation}
In particular, with the collection of measures $(\sigma_1,\ldots,\sigma_m)$, we can also define the reversed Nikishin system $(s_{m,m},\ldots,s_{m,1}) = \mathcal{N}(\sigma_m,\ldots,\sigma_1)$.

\subsection{Statement of main results.}

Equations (\ref{JLS1}) and (\ref{JLS2}) suggests the following extensions to the case of Nikishin systems with $m\geq 2$ measures.

\begin{definition}{[Direct/reversed Hermite-Pad\'e approximation]} \label{MTPm1}

\noindent Consider the Nikishin systems $\mathcal{N}(\sigma_1, \sigma_2,\ldots, \sigma_{m})$ and  $\mathcal{N}(\sigma_m, \sigma_{m-1},\ldots, \sigma_1)$. Then, for each $n \in \mathbb{N},$ there exist polynomials $a_{n,0},a_{n,1},\ldots, a_{n,m}$, with $\deg a_{n,j}\leq n-1, j=0,1\ldots,m-1$,  and $\deg a_{n,m}\leq n$, not all identically equal to zero, called \textit{direct/reversed (DR) Hermite-Pad\'e polynomials} that satisfy:
\begin{align}
\left(a_{n,0}-a_{n,1}\widehat{s}_{1,1}+a_{n,2}\widehat{s}_{1,2}\cdots+ (-1)^{m}a_{n,m}\widehat{s}_{1,m}\right)(z)=\mathcal{O}(1/z^{n+1})\label{tipoIm}\\
(-1)\left(a_{n,1}-a_{n,m}\widehat{s}_{m,2}\right)(z)=\mathcal{O}(1/z)\label{tipoIIcm}\\
..........................................................\nonumber\\
(-1)^{m-2}\left(a_{n,m-2}-a_{n,m}\widehat{s}_{m,m-1}\right)(z)=\mathcal{O}(1/z)\label{tipoIIbm}\\
(-1)^{m-1}\left(a_{n,m-1}-a_{n,m}\widehat{s}_{m,m}\right)(z)=\mathcal{O}(1/z)\label{tipoIIam}.
\end{align}
\end{definition}

Alternatively, we could extend the approximation problem as follows.

\begin{definition}{[Multi-level Hermite-Pad\'e approximation]}\label{MTPm2}

\noindent Consider the Nikishin system $\mathcal{N}(\sigma_1, \sigma_2,\ldots, \sigma_{m})$. Then, for each $n \in \mathbb{N},$ there exist  polynomials $a_{n,0},a_{n,1},\ldots, a_{n,m}$ with $\deg a_{n,j}\leq n-1, j=0,1\ldots,m-1,$  and $\deg a_{n,m}\leq n$, not all identically equal to zero, called \textit{multi-level (ML) Hermite-Pad\'e polynomials} that verify:
\begin{align}
\mathcal{A}_{n,0}(z) := \left(a_{n,0}-a_{n,1}\widehat{s}_{1,1}+a_{n,2}\widehat{s}_{1,2}\cdots+ (-1)^{m}a_{n,m}\widehat{s}_{1,m}\right)(z)=\mathcal{O}(1/z^{n+1})\label{tipoIam}\\
\mathcal{A}_{n,1}(z) :=\left(-a_{n,1}+a_{n,2}\widehat{s}_{2,2}-a_{n,3}\widehat{s}_{2,3}\cdots+ (-1)^{m}a_{n,m}\widehat{s}_{2,m}\right)(z)=\mathcal{O}(1/z)\label{tipoIbm}\\
........................................................................................\nonumber\\
\mathcal{A}_{n,m-1}(z) :=\left((-1)^{m-1}a_{n,m-1}+(-1)^{m}a_{n,m}\widehat{s}_{m,m}\right)(z)=\mathcal{O}(1/z)\label{tipoIdm}.
\end{align}
\end{definition}

Notice that in this formulation the reversed Nikishin system $\mathcal{N}(\sigma_m, \sigma_{m-1},\ldots, \sigma_1)$ does not appear explicitly. On the other hand, the interpolation conditions involve all Nikishin systems at the ``inner'' levels; that is,
$\mathcal{N}(\sigma_1, \sigma_2,\ldots, \sigma_{m})$, $\mathcal{N}(\sigma_2, \sigma_3,\ldots, \sigma_{m})$, $\ldots$, $(s_{m,m})=\mathcal{N}(\sigma_m)$.  However, in Section 3 we prove

\begin{theorem}\label{equiv}
For each fixed $n$, the DR Hermite-Pad\'e polynomials and the ML Hermite-Pad\'e polynomials coincide and
the vector polynomial $(a_{n,0},a_{n,1},\ldots, a_{n,m})$ is uniquely determined except for  constant multiples.
Additionally, $\deg a_{n,j}=n-1, j=0,\ldots,m-1,$  and $\deg a_{n,m}=n$. Moreover, the zeros of $a_{n,m-1}$ and $a_{n,m}$ are all simple and lie in $\stackrel{\circ}\Delta_m$ (the interior of $\Delta_m$  with the Euclidean topology of $\mathbb{R}$).
\end{theorem}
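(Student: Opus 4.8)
The proof has four ingredients, which I would carry out in the following order: the identification of the two schemes, existence of a nonzero solution, uniqueness together with the exact degrees, and the localization of the zeros of $a_{n,m-1}$ and $a_{n,m}$.

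\emph{Equivalence of the two schemes.} Relations (\ref{tipoIm}) and (\ref{tipoIam}) are literally the same, and since $\widehat s_{m,m}=\widehat\sigma_m$ relation (\ref{tipoIIam}) coincides with (\ref{tipoIdm}); so only the intermediate conditions must be matched. The one tool needed is the elementary identity $\widehat{\langle\mu,\nu\rangle}(z)+\widehat{\langle\nu,\mu\rangle}(z)=\widehat\mu(z)\widehat\nu(z)$, which follows from the partial–fraction decomposition of $\bigl((z-x)(z-y)\bigr)^{-1}$. I would argue by downward induction on the level $k$: granting that the conditions at levels $k+1,\dots,m-1$ already hold (in either, hence both, forms), peel the leading measure $\sigma_{k+1}$ off the ML form $\mathcal A_{n,k}$ by means of that identity; the terms so produced are, on one hand, the ML forms $\mathcal A_{n,l}$ with $l>k$, which are $\mathcal O(1/z)$ by the inductive hypothesis, and, on the other, precisely the reversed combination $(-1)^{k}\bigl(a_{n,k}-a_{n,m}\widehat s_{m,k+1}\bigr)$ of (\ref{tipoIIcm})–(\ref{tipoIIam}); hence the $\mathcal O(1/z)$ requirements at level $k$ in the two formulations are equivalent. (For $m=2$ there is nothing to prove, for $m=3$ it is a one–line computation that exhibits the general pattern.) From now on I use whichever formulation is convenient.

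\emph{Existence and the orthogonality chain.} In the DR formulation, (\ref{tipoIIcm})–(\ref{tipoIIam}) force $a_{n,k}$ to be the polynomial part of $a_{n,m}\widehat s_{m,k+1}$ for $k=1,\dots,m-1$, and then (\ref{tipoIm}) forces $a_{n,0}$ to be the polynomial part of $\sum_{j=1}^{m}(-1)^{j-1}a_{n,j}\widehat s_{1,j}$; with these choices $\mathcal A_{n,0}$ is automatically $\mathcal O(1/z)$, and imposing $\mathcal A_{n,0}=\mathcal O(1/z^{n+1})$ is a homogeneous linear system of $n$ equations in the $n+1$ coefficients of $a_{n,m}$, so a nonzero solution exists. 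For the remaining assertions one runs a chain of orthogonality relations. Each $\mathcal A_{n,k}$ is holomorphic off $\Delta_{k+1}$, vanishes at $\infty$, and, by the Sokhotski–Plemelj formulas together with $ds_{j,k}=\widehat s_{j+1,k}\,d\sigma_j$, its jump across $\Delta_{k+1}$ is proportional to $\mathcal A_{n,k+1}\,d\sigma_{k+1}$, while the jump of $\mathcal A_{n,m-1}$ across $\Delta_m$ is proportional to $a_{n,m}\,d\sigma_m$. Integrating $p(z)\mathcal A_{n,0}(z)$ with $\deg p\le n-1$ over a large circle (where it is $\mathcal O(1/z^{2})$) and collapsing the contour onto $\Delta_1$ gives $\int p\,\mathcal A_{n,1}\,d\sigma_1=0$; since $\sigma_1$ has constant sign and infinitely many points, $\mathcal A_{n,1}$ changes sign at least $n$ times in $\stackrel{\circ}{\Delta}_1$ (the alternative $\mathcal A_{n,1}\equiv 0$ is excluded, because propagating it down the chain forces $a_{n,m}\equiv 0$ and then, via the DR relations, the trivial solution). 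Collecting these sign–change points in a monic polynomial $q_1$ of degree $\ell_1\ge n$ with zeros in $\stackrel{\circ}{\Delta}_1$, the function $\mathcal A_{n,1}/q_1$ is holomorphic off $\Delta_2$, is $\mathcal O(1/z^{\ell_1+1})$, and $q_1$ is one–signed on $\Delta_2$; the same move then produces $\int p\,\mathcal A_{n,2}\,d\sigma_2/q_1=0$ for $\deg p\le\ell_1-1$, hence $\ge n$ sign changes of $\mathcal A_{n,2}$ in $\stackrel{\circ}{\Delta}_2$, and so on down to
\[
\int_{\Delta_m}p(x)\,a_{n,m}(x)\,\frac{d\sigma_m(x)}{q_{m-1}(x)}=0,\qquad \deg p\le \ell_{m-1}-1\ \ (\ell_{m-1}\ge n),
\]
with $q_{m-1}$ monic, zeros in $\stackrel{\circ}{\Delta}_{m-1}$, and $d\sigma_m/q_{m-1}$ of constant sign on $\Delta_m$.

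\emph{Degrees, uniqueness, and the zeros.} The last display shows $a_{n,m}$ has $\ge n$ sign changes in $\stackrel{\circ}{\Delta}_m$; since $\deg a_{n,m}\le n$ this forces $\deg a_{n,m}=n$ with all its zeros simple and in $\stackrel{\circ}{\Delta}_m$, and then, as a nonzero polynomial of degree $n$ cannot be orthogonal to all polynomials of degree $\le n$ with respect to a constant–sign measure, $\ell_{m-1}=n$. If a nontrivial solution had $\deg a_{n,m}\le n-1$, the chain would still yield $\ge n$ sign changes, hence $a_{n,m}\equiv 0$, and then (\ref{tipoIIcm})–(\ref{tipoIIam}) give $a_{n,k}\equiv 0$ for $1\le k\le m-1$ and (\ref{tipoIm}) gives $a_{n,0}\equiv 0$, a contradiction; thus every nonzero solution has $\deg a_{n,m}=n$, and subtracting from one solution the multiple of another that cancels the leading coefficient of $a_{n,m}$ yields the trivial vector, so the solution is unique up to a constant. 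The equalities $\deg a_{n,j}=n-1$, $j=0,\dots,m-1$, follow from the polynomial–part formulas above, because each iterated measure $s_{m,k+1}$ (resp.\ $s_{1,m}$) has nonzero total mass, so no cancellation of the top coefficient occurs. Finally, writing $a_{n,m}=c_nq_n$ with $q_n$ monic of degree $n$ and simple zeros $\xi_1<\dots<\xi_n$ in $\stackrel{\circ}{\Delta}_m$, relation (\ref{tipoIIam}) gives $a_{n,m-1}(z)=\int_{\Delta_m}\frac{a_{n,m}(z)-a_{n,m}(x)}{z-x}\,d\sigma_m(x)$, hence $a_{n,m-1}(\xi_k)=\int_{\Delta_m}\frac{a_{n,m}(x)}{x-\xi_k}\,d\sigma_m(x)$ with a polynomial integrand; putting $d\sigma_m=q_{m-1}\,d\nu$, $\nu:=\sigma_m/q_{m-1}$ of constant sign with $\deg q_{m-1}=n$, the integrand $\tfrac{a_{n,m}(x)}{x-\xi_k}q_{m-1}(x)$ has degree $2n-1$, so Gaussian quadrature for $\nu$ at the nodes $\xi_1,\dots,\xi_n$ (the zeros of the degree–$n$ orthogonal polynomial $a_{n,m}$ for $\nu$) evaluates it exactly and yields $a_{n,m-1}(\xi_k)=\lambda_k\,q_{m-1}(\xi_k)\,a_{n,m}'(\xi_k)$, where the Christoffel numbers $\lambda_k$ have constant sign and $q_{m-1}(\xi_k)$ has constant sign in $k$ (its zeros lie off $\Delta_m$); since $a_{n,m}'(\xi_k)$ alternates in sign, so does $a_{n,m-1}(\xi_k)$, whence $a_{n,m-1}$ has a zero in each $(\xi_k,\xi_{k+1})$, that is $n-1$ simple zeros in $\stackrel{\circ}{\Delta}_m$, which are all of them.

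\emph{Where the difficulty lies.} The equivalence and the counting for existence are routine. The crux is the orthogonality chain: one must track the Plemelj jumps correctly through all levels and, crucially, divide $\mathcal A_{n,k}$ by the polynomial carrying its sign changes so that the number of available orthogonality conditions does not decay along the chain (without this bookkeeping one only obtains a decreasing count and loses the sharp conclusions), and then establish the Gaussian–quadrature identity that pins down the zeros of the penultimate polynomial $a_{n,m-1}$. The degenerate configurations noted in the text (generating measures with finite support, consecutive $\Delta_j$ sharing an endpoint) must be treated separately.
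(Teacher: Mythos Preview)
Your proof is correct and follows essentially the same strategy as the paper: the equivalence of the two schemes via the two--measure identity $\widehat{\langle\mu,\nu\rangle}+\widehat{\langle\nu,\mu\rangle}=\widehat\mu\,\widehat\nu$ (which is the paper's relation \eqref{rel-fund}), the orthogonality chain obtained by dividing each $\mathcal A_{n,k}$ by the polynomial carrying its sign changes so that the count of orthogonality conditions does not decay (this is exactly the content of the paper's Lemma~\ref{degree}, which invokes Lemma~\ref{reduc}), and the interlacing of the zeros of $a_{n,m-1}$ with those of $a_{n,m}$.

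Two remarks on differences. First, your contour--collapsing argument (``integrate over a large circle and collapse onto $\Delta_1$'') tacitly assumes bounded supports; the paper explicitly allows unbounded $\Delta_j$, and its Lemma~\ref{reduc} (taken from \cite{LS}) derives the integral representation \eqref{eq:3} and the orthogonality \eqref{eq:4} directly from the asymptotic expansion at $\infty$, which is what is needed in that generality. Your Sokhotski--Plemelj phrasing likewise presupposes absolute continuity that is not assumed. The conclusions you reach are correct, but in the generality of the paper you should appeal to the integral representation rather than to contour deformation. Second, for the zeros of $a_{n,m-1}$ the paper simply observes that $a_{n,m-1}/a_{n,m}$ is a multipoint diagonal Pad\'e approximant of $\widehat s_{m,m}$ and that $a_{n,m-1}$ is the associated second--kind polynomial, whose zeros are known to interlace those of $a_{n,m}$; your Gaussian--quadrature computation is a self--contained proof of precisely that classical fact and is a nice touch.
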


In both cases, finding the polynomials $(a_{n,0},a_{n,1},\dots,a_{n,m})$ reduces to solving a homogeneous linear system of $n(m+1)$ equations (the interpolations conditions) on $n(m+1)+1$ unknowns (the coefficients of the polynomials); therefore, the corresponding system of equations has a non-trivial solution.

\medskip

For a fixed $n \in \mathbb{N},$ consider the vector $(b_{n,0},\ldots,b_{n,m})$ of ML Hermite-Pad\'e polynomials associated with the reversed Nikishin system $\mathcal{N}(\sigma_m,\ldots,\sigma_1)$. That is, the vector is non null and
	\begin{align}
		\left(b_{n,0}-b_{n,1}\widehat{s}_{m,m}+b_{n,2}\widehat{s}_{m,m-1}\cdots+ (-1)^{m}b_{n,m}\widehat{s}_{m,1}\right)(z)=\mathcal{O}(1/z^{n+1})\label{dtipoIam}\\
		\left(-b_{n,1}+b_{n,2}\widehat{s}_{m-1,m-1} \cdots+ (-1)^{m}b_{n,m}\widehat{s}_{m-1,1}\right)(z)=\mathcal{O}(1/z)\label{dtipoIbm}\\
		........................................................................................\nonumber\\
		\left((-1)^{m-1}b_{n,m-1}+(-1)^{m}b_{n,m}\widehat{s}_{1,1}\right)(z)=\mathcal{O}(1/z)\label{dtipoIdm}.
	\end{align}

Let
\[
		K(x_1,x_2)=\frac{1}{x_1-x_2}
\]
denote the usual Cauchy kernel. For $m> 2$	we define the Cauchy convolution kernel in the following manner
\[
		K(x_1,x_m)=\int_{\Delta_2}\int_{\Delta_3}\cdots \int_{\Delta_{m-1} }
		\frac{\, d \sigma_{m-1}(x_{m-1})\cdots \, d\sigma_3(x_3)d\sigma_2(x_2)   }
		{(x_{m-1}-x_m)(x_{m-2}-x_{m-1})\cdots\,(x_2-x_3)(x_1-x_2)  }.
\]

\begin{theorem}\label{TBIO}
The sequences of polynomials $\left(a_{n,m}(z)\right)_{n\in \mathbb{N}}$ and $\left(b_{k,m}(z)\right)_{k\in \mathbb{N}}$ are biorthogonal with respect to the Cauchy convolution kernel and the measures $(\sigma_1,\sigma_2,\ldots, \sigma_m)$; that is,
\begin{equation}\label{Bi}
	\int_{\Delta_1}\int_{\Delta_m} b_{k,m}(x_1)K(x_1,x_m) a_{n,m}(x_m)  d\sigma_m(x_m)d\sigma_1(x_1) =h_n \delta_{n,k}, \qquad h_n\neq 0.
	\end{equation}
where $\delta_{k,n} =0, k\neq n,$ and $\delta_{n,n} = 1$.
\end{theorem}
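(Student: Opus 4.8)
The plan is to extract the biorthogonality relation directly from the interpolation conditions defining the ML Hermite--Padé polynomials, exploiting the Nikishin structure to convert the linear forms into double integrals against the Cauchy convolution kernel. First I would record the key algebraic identity: because $s_{1,m} = \langle \sigma_1, \sigma_2, \ldots, \sigma_m\rangle$, one has, for any polynomial $p$,
\[
\int_{\Delta_1} \frac{p(x_1)}{z - x_1}\, d\langle \sigma_1,\ldots,\sigma_m\rangle(x_1)
= \int_{\Delta_1}\int_{\Delta_m} \frac{p(x_1)\, K(x_1,x_m)}{z - x_1}\, d\sigma_m(x_m)\, d\sigma_1(x_1),
\]
so that the Cauchy kernel $K(x_1,x_m)$ is exactly the object interpolating between the outer variable $x_1$ (carrying $\sigma_1$) and the inner variable $x_m$ (carrying $\sigma_m$). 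The first linear form $\mathcal{A}_{n,0}(z)$ in \eqref{tipoIam} can then, after telescoping the successive conditions \eqref{tipoIbm}--\eqref{tipoIdm}, be written as a single Cauchy transform
\[
\mathcal{A}_{n,0}(z) = \int_{\Delta_1} \frac{\mathcal{Q}_n(x_1)}{z - x_1}\, d\sigma_1(x_1),
\]
where $\mathcal{Q}_n(x_1) = \int_{\Delta_m} K(x_1,x_m)\, a_{n,m}(x_m)\, d\sigma_m(x_m)$ is (up to lower-order terms coming from $a_{n,0},\ldots,a_{n,m-1}$) the function whose moments we wish to control; here I would use Theorem~\ref{equiv} to know the exact degrees of the $a_{n,j}$.

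Next I would use the order condition $\mathcal{A}_{n,0}(z) = \mathcal{O}(1/z^{n+1})$ together with the asymptotic expansion \eqref{expansion}: this forces the first $n$ moments of the representing measure of $\mathcal{A}_{n,0}$ to vanish, i.e.
\[
\int_{\Delta_1} x_1^{\nu}\, \mathcal{Q}_n(x_1)\, d\sigma_1(x_1) = 0, \qquad \nu = 0,1,\ldots, n-1,
\]
after absorbing the polynomial parts $a_{n,0}, a_{n,1}\widehat{s}_{1,1}, \ldots$ which contribute only to moments already accounted for — this is where some care is needed, since those terms are not literally zero, but the combination of all the interpolation conditions \eqref{tipoIam}--\eqref{tipoIdm} is precisely designed so that they cancel against the tails; making this cancellation precise is the technical heart of the argument. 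Unwinding the definition of $\mathcal{Q}_n$, this says
\[
\int_{\Delta_1}\int_{\Delta_m} x_1^{\nu}\, K(x_1,x_m)\, a_{n,m}(x_m)\, d\sigma_m(x_m)\, d\sigma_1(x_1) = 0, \qquad \nu = 0,\ldots,n-1.
\]
Running the same computation for the reversed system, with the variable $x_m$ now playing the outer role, gives that $b_{k,m}$ is orthogonal in the analogous sense; combining the two, and using that $b_{k,m}$ has degree exactly $k$ (again Theorem~\ref{equiv}), I would expand $b_{k,m}(x_1) = \sum_{\nu=0}^{k} \beta_\nu x_1^\nu$ in the left-hand side of \eqref{Bi} to conclude that the integral vanishes whenever $k < n$, and symmetrically whenever $n < k$.

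For the diagonal case $k = n$ I would argue that $h_n \neq 0$ by contradiction: if $\int\int b_{n,m}(x_1) K(x_1,x_m) a_{n,m}(x_m)\, d\sigma_m\, d\sigma_1 = 0$, then combined with the vanishing for $\nu = 0,\ldots,n-1$ we would get that $\int_{\Delta_1} p(x_1)\mathcal{Q}_n(x_1)\,d\sigma_1(x_1) = 0$ for every polynomial $p$ of degree $\le n = \deg b_{n,m}$, hence $\mathcal{A}_{n,0}(z) = \mathcal{O}(1/z^{n+2})$, which by the standard AT-system/normality argument for Nikishin systems (already implicit in the uniqueness part of Theorem~\ref{equiv}) would force the whole vector $(a_{n,0},\ldots,a_{n,m})$ to vanish, a contradiction; one also checks $h_n$ is independent of the chosen normalizations only up to the nonzero scalar, which is all that is claimed. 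The main obstacle I anticipate is the bookkeeping in the second step — rigorously showing that the polynomial and mixed terms in $\mathcal{A}_{n,0}$ collectively contribute nothing to the first $n$ moments of $\mathcal{Q}_n\,d\sigma_1$ — and the cleanest way to handle it is probably to introduce the functions $\mathcal{A}_{n,1},\ldots,\mathcal{A}_{n,m-1}$ from Definition~\ref{MTPm2} as intermediate objects and peel off one measure at a time, so that each order condition $\mathcal{O}(1/z)$ is used exactly once.
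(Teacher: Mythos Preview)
Your approach is correct and matches the paper's proof. The only clarification: your worry about ``absorbing the polynomial parts $a_{n,0}, a_{n,1}\widehat{s}_{1,1},\ldots$'' is unnecessary, because iterating \eqref{eq:3} of Lemma~\ref{reduc} (with $w\equiv 1$) through the levels gives the exact identity $\mathcal{A}_{n,1}(x_1) = (-1)^m \int_{\Delta_m} K(x_1,x_m)\,a_{n,m}(x_m)\,d\sigma_m(x_m)$, so your $\mathcal{Q}_n$ is (up to sign) $\mathcal{A}_{n,1}$ itself with no leftover terms---the paper carries out precisely this peeling-off in \eqref{bio2}--\eqref{bio5}, and the diagonal contradiction you sketch is Corollary~\ref{misc}(a) (equivalently~(b) for $\mathcal{A}_{n,1}$).
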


This type of biorthogonality has been discussed previously, among others,  in \cite{Bertola-Bothner,Bertola:CBOPs}.

\medskip

We are mainly concerned with the convergence properties of the sequence of vector rational functions
\[ \left(\frac{a_{n,0}}{a_{n,m}},\ldots,\frac{a_{n,m-1}}{a_{n,m}}\right), \qquad n\in \mathbb{N}.\]
Taking into consideration  the interpolation conditions and the relation
\begin{equation}
\label{rel-fund}
 0\equiv  \widehat{s}_{m,1}  + \sum_{j=1}^{m-1}(-1)^{j}   \widehat{s}_{m,j+1} \widehat{s}_{1,j} +
  (-1)^m\widehat{s}_{1, m}, \qquad z \in \mathbb{C} \setminus (\Delta_{1} \cup \Delta_m),
\end{equation}
whose proof may be found in \cite[Lemma 2.9]{FL4} (and is not difficult to verify), one can expect that under appropriate assumptions the limit should be the system of functions $(\widehat{s}_{m,1},\ldots,\widehat{s}_{m,m})$. This prediction is consistent with the convergence properties of type II and type I Hermite-Pad\'e approximants studied in \cite[Theorem 1]{Bus} (see also \cite{LF3}, \cite{GRS}, \cite{Stahl}) and \cite[Theorem 1.4]{LS}, respectively. For the definition of type I and type II Hermite-Pad\'e approximation see Subsection \ref{tipoI-II}.

\medskip

Let $\Delta \subset \mathbb{R}$ and $\sigma \in \mathcal{M}(\Delta)$. We say that $\sigma$ satisfies Carleman's condition \cite{Car} if
\begin{equation}\label{Carle} \sum_{\nu \geq 0} |c_\nu|^{-1/2\nu} = \infty,
\end{equation}
where $c_\nu = \int x^\nu d\sigma(x)$ denotes the $\nu$-th moment of $\sigma$.

\medskip

For a measure $\sigma$ supported on an interval of the form $[a,+\infty)$ or $(-\infty,a], a\in \mathbb{R},$ Carleman's condition guarantees that the corresponding moment problem is determinate. Stieltjes' theorem \cite{Sti} states that if the moment problem for $\sigma$ is determinate  then the diagonal sequence of Pad\'e approximants of $\widehat{\sigma}$ converges. If $\supp(\sigma)$ is bounded the moment problem is determinate and Markov's theorem follows (see \cite{Mar}).

\begin{theorem}\label{CTPm}
For each $n \in \mathbb{N}$, let $a_{n,0},a_{n,1},\ldots, a_{n,m}$ be the collection of
ML (or DR) Hermite-Pad\'e polynomials associated with the Nikishin system $\mathcal{N}(\sigma_1, \sigma_2,\ldots, \sigma_{m})$.
Suppose that either the sequence of moments of $\sigma_m$ satisfies Carleman's condition or  $\Delta_{m-1}$ is a bounded interval contained in $\mathbb{C} \setminus \Delta_{m}$. Then, for $j=0,\ldots m-1$
\begin{equation}\label{Con01m}
\lim_{n\rightarrow \infty}  \frac{a_{ n,j}}{a_{ n,m}} =  \widehat{s}_{m,j+1},
\end{equation}
uniformly on each compact subset $\mathcal{K} \subset \mathbb{C} \setminus \Delta_m$. Moreover
\begin{equation}\label{Con00m}
\lim_{n\rightarrow \infty}  (-1)^{j}\frac{a_{n,j}(z)}{a_{n,m}(z)}+\sum_{k=j+1}^{m-1}(-1)^{k}\frac{a_{n,k}(z)}{a_{n,m}(z)}\widehat{s}_{j+1,k}(z)+(-1)^m\widehat{s}_{j+1,m}(z)=0
\end{equation}
uniformly on each compact subset $\mathcal{K} \subset \mathbb{C} \setminus (\Delta_{j+1} \cup \Delta_m)$.
\end{theorem}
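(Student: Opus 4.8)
The plan is to reduce the convergence statement \eqref{Con01m} to a classical Markov/Stieltjes-type theorem applied to the last measure $\sigma_m$, and then to bootstrap from $j=m-1$ down to $j=0$. First I would use Theorem \ref{equiv}: the polynomials $a_{n,m}$ have exact degree $n$ and all their zeros are simple and lie in $\stackrel{\circ}\Delta_m$, and similarly for the reversed system the polynomials $b_{k,m}$ have their zeros in $\stackrel{\circ}\Delta_1$. The biorthogonality relation \eqref{Bi} of Theorem \ref{TBIO} is the crucial input: for fixed $n$ it says that $a_{n,m}$ is orthogonal, against $d\sigma_m$, to every polynomial of degree $\le n-1$ obtained as $\int_{\Delta_1} b_{k,m}(x_1)K(x_1,x_m)\,d\sigma_1(x_1)$ with $k<n$. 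Since the $b_{k,m}$ span all polynomials of each degree (again by the exact-degree statement in Theorem \ref{equiv} applied to the reversed system), the functions $Q_k(x_m):=\int_{\Delta_1} b_{k,m}(x_1)K(x_1,x_m)\,d\sigma_1(x_1)$ form a Chebyshev-type system on $\Delta_m$ — this is where I would invoke the AT-system / Nikishin structure machinery, exactly as in the references \cite{Bus,LS,FL4} — so that $a_{n,m}$ behaves like an orthogonal polynomial of degree $n$ with respect to a varying but controlled weight.

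Next I would handle the level $j=m-1$ directly. From \eqref{tipoIdm}, $(-1)^{m-1}a_{n,m-1}+(-1)^m a_{n,m}\widehat s_{m,m}=\mathcal O(1/z)$, so $\frac{a_{n,m-1}}{a_{n,m}}(z)-\widehat s_{m,m}(z)=\frac{R_{n}(z)}{a_{n,m}(z)}$ where $R_n=(-1)^{m-1}\mathcal A_{n,m-1}$ vanishes at $\infty$. Writing $\mathcal A_{n,m-1}$ as a single Cauchy transform $\int \frac{q_n(x)\,d\sigma_m(x)}{z-x}$ with $q_n$ a polynomial built from the $a_{n,j}$ (this rewriting uses the nested structure of the $\widehat s_{m,k}$ and the identity \eqref{rel-fund}), the orthogonality conditions force $q_n a_{n,m}$ to change sign at least $n$ times on $\Delta_m$, hence $R_n/a_{n,m}$ is a ratio of the ``second kind'' and ``first kind'' functions associated with $a_{n,m}$. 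Under Carleman's condition for $\sigma_m$ (or boundedness of $\Delta_{m-1}$ away from $\Delta_m$) the determinacy of the moment problem lets me apply the Stieltjes/Markov convergence theorem cited just before the statement, giving $\frac{a_{n,m-1}}{a_{n,m}}\to\widehat s_{m,m}$ uniformly on compacts of $\mathbb C\setminus\Delta_m$; a normal-families argument (the ratios are uniformly bounded since their only poles are at zeros of $a_{n,m}\subset\Delta_m$, and limit points are identified through the moment asymptotics) upgrades pointwise to uniform convergence.

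For the remaining levels $j=m-2,\dots,0$ I would argue by downward induction, using the multi-level relations \eqref{tipoIbm}–\eqref{tipoIdm}. At level $j$, relation $\mathcal A_{n,j}(z)=\mathcal O(1/z)$ reads $(-1)^{j}a_{n,j}+\sum_{k=j+1}^{m}(-1)^{k}a_{n,k}\widehat s_{j+1,k}=\mathcal O(1/z)$; dividing by $a_{n,m}$ and using the already-established limits $\frac{a_{n,k}}{a_{n,m}}\to\widehat s_{m,k+1}$ for $k>j$, the left side tends to $(-1)^{j}\frac{a_{n,j}}{a_{n,m}}+\sum_{k=j+1}^{m-1}(-1)^{k}\widehat s_{m,k+1}\widehat s_{j+1,k}+(-1)^m\widehat s_{j+1,m}$, which by the fundamental identity \eqref{rel-fund} (applied to the sub-Nikishin system $\mathcal N(\sigma_{j+1},\dots,\sigma_m)$, with $\widehat s_{m,k+1}=\widehat s_{m,k+1}$ playing the role of the reversed functions) equals $(-1)^{j}\frac{a_{n,j}}{a_{n,m}}-(-1)^{j}\widehat s_{m,j+1}$ plus a quantity converging to zero. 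This simultaneously proves \eqref{Con00m} and, since $\frac{a_{n,j}}{a_{n,m}}$ is again a ratio of functions holomorphic off $\Delta_m$ (bounded on compacts by a normal-families/degree argument), yields \eqref{Con01m} at level $j$. I expect the main obstacle to be the first two paragraphs: making rigorous that the biorthogonality \eqref{Bi} endows $a_{n,m}$ with genuine (varying-weight) orthogonality on $\Delta_m$ and that the associated moment/interpolation data inherit determinacy from Carleman's condition on $\sigma_m$ — in other words, transporting the classical Stieltjes–Markov mechanism through the Cauchy-convolution kernel. Once $j=m-1$ is settled, the induction over the inner levels is essentially bookkeeping with \eqref{rel-fund} and uniform boundedness.
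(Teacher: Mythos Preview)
Your inductive step for $j<m-1$ is circular. Combining the induction hypothesis with \eqref{RM} you correctly obtain
\[
\frac{\mathcal A_{n,j}(z)}{a_{n,m}(z)}=(-1)^j\left(\frac{a_{n,j}(z)}{a_{n,m}(z)}-\widehat s_{m,j+1}(z)\right)+o(1)
\]
on compact subsets of $\mathbb C\setminus(\Delta_{j+1}\cup\Delta_m)$, but this says nothing unless you know independently that $\mathcal A_{n,j}/a_{n,m}\to 0$ --- and that is precisely \eqref{Con00m}. The normal-families rescue does not work: the fact that the poles of $a_{n,j}/a_{n,m}$ lie in $\Delta_m$ and that $\deg a_{n,j}<\deg a_{n,m}$ does \emph{not} force uniform boundedness on compact subsets of $\mathbb C\setminus\Delta_m$ (e.g.\ $(z-a)^{n-1}/z^n$ blows up at any fixed point with $|z|<|z-a|$). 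Even granting boundedness, the argument would give convergence only on $\mathbb C\setminus(\Delta_{j+1}\cup\Delta_m)$, not on the larger region $\mathbb C\setminus\Delta_m$ claimed in \eqref{Con01m}.

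Your treatment of $j=m-1$ also misses the essential input. From \eqref{tipoIdm} alone one has a single interpolation condition at $\infty$, and the biorthogonality \eqref{Bi} does not supply the missing ones: the functions $x_m\mapsto\int b_{k,m}(x_1)K(x_1,x_m)\,d\sigma_1(x_1)$ are not polynomials in $x_m$, so orthogonality to them is not the standard orthogonality feeding the Markov/Stieltjes mechanism (it gives zero location, already in Theorem~\ref{equiv}, but no interpolation count). What the paper uses is the reduction chain proved in Lemma~\ref{degree}: starting from $\mathcal A_{n,0}=\mathcal O(1/z^{n+1})$ and iterating Lemma~\ref{reduc} one obtains a polynomial $w_{n,m-1}$ of degree $n$, zeros in $\stackrel{\circ}\Delta_{m-1}$, with $(a_{n,m-1}-a_{n,m}\widehat s_{m,m})/w_{n,m-1}=\mathcal O(1/z^{n+1})\in\mathcal H(\mathbb C\setminus\Delta_m)$; thus $a_{n,m-1}/a_{n,m}$ is a genuine \emph{multipoint} diagonal Pad\'e approximant of $\widehat s_{m,m}$ and \cite[Theorem~1]{lago} applies. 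For each $j<m-1$ the paper does \emph{not} induct downward but repeats the reduction idea: $m-j-1$ successive divisions by $\widehat s_{j+1,j+1},\widehat s_{j+2,j+1},\ldots$ (via \eqref{s22}--\eqref{4.4}) combined with Lemmas~\ref{reduc} and~\ref{lem:2} produce a polynomial $w_{n,j}^*$ with zeros in $\stackrel{\circ}\Delta_{m-1}$ and
\[
\frac{a_{n,j}-a_{n,m}\,\widehat s_{m,j+1}}{w_{n,j}^*}=\mathcal O\!\left(\frac{1}{z^{\,n-m+j+2}}\right)\in\mathcal H(\mathbb C\setminus\Delta_m),
\]
so $a_{n,j}/a_{n,m}$ is an \emph{incomplete} multipoint Pad\'e approximant of $\widehat s_{m,j+1}$ directly on $\mathbb C\setminus\Delta_m$. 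Convergence in Hausdorff content then follows from \cite[Lemma~2]{Bus}, and \cite[Lemma~1]{Gon} upgrades it to uniform convergence since the difference is holomorphic there. Relation \eqref{Con00m} is deduced afterward from \eqref{Con01m} and \eqref{RM}, not the other way around.
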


The limit of the sequence  $(a_{n,0}/a_{n,m},\ldots, a_{n,m-1}/a_{n,m}), n\in \mathbb{N},$ is the same as for type I Hermite-Pad\'e approximation of $\mathcal{N}(\sigma_1, \sigma_2,\ldots, \sigma_{m})$ and type II Hermite-Pad\'e approximation of $\mathcal{N}(\sigma_m, \sigma_{m-1},\ldots, \sigma_{1})$. For details, see \cite[Theorem 1.4]{LS} and \cite[Theorem 1]{Bus}.

\medskip

In \cite{jacek}, the authors study the case where the solutions (\ref{peakons}) are formed by a  finite linear combination of
single peakon terms $m_i e^{-|x-x_i|}$. In that case, the Cauchy transform of the spectral measures $(s_{1,1},s_{1,2},s_{2,2},s_{2,1})$ are rational fractions and for $n$ sufficiently large we have  exact equalities in (\ref{JLS1}) and (\ref{JLS2}). If we are interested in the case of  peakon solutions (\ref{peakons})  formed by an infinite number of peakons, or if we study the cubic string problem for which the weight $g(y)$ is not a discrete measure we need to deal with the convergence of the corresponding mixed type Hermite-Pad\'e approximants. Thus Theorem \ref{CTPm} opens a new direction of research aimed at the
construction of general solutions to the DP equation using peakon approximations.

\section{Proof of the main results.}

\subsection{Type I and type II Hermite-Pad\'e polynomials.} \label{tipoI-II}

We are considering a combination of type I and type II Hermite-Pad\'e polynomials which have received considerable attention for its many applications. Our construction falls in the category of mixed type Hermite-Pad\'e approximation.

\medskip

Consider a Nikishin system $(s_{1,1},\ldots,s_{1,m}) = \mathcal{N}(\sigma_1,\ldots,\sigma_m)$.
Fix ${\bf n}=(n_1,\ldots,n_m)\in {\mathbb{Z}}_+^m \setminus \{{\bf 0}\}$, ${\mathbb{Z}}_+=\{0,1,2,\ldots\}.$  Their exist polynomials $Q_{\bf n},P_{{\bf n},1},\ldots P_{{\bf n},m} $, called type II Hermite-Pad\'e polynomials of ${\bf \widehat{s}} = (\widehat{s}_{1,1},\ldots,\widehat{s}_{1,m})$ with respect to  ${\bf n}$ that satisfy:
\begin{itemize}
\item[i)]  $\deg Q_{\bf n} \leq n_1+\cdots+n_m$, $Q_{\bf n}\not \equiv 0$,
\item[ii)] $
(Q_{\bf n}  \widehat{s}_{1,j} - P_{{\bf n},j})(z)=\mathcal{O}(1/z^{n_j+1}),  \qquad j=1,\ldots,m.$
\end{itemize}
On the other hand, the collection of polynomials $a_{{\bf n},0},\ldots,a_{{\bf n},m}$ is called a type I Hermite-Pad\'e polynomial of ${\bf \widehat{s}}$ with respect to $\bf n$ if:
\begin{itemize}
\item[iii)] $\deg a_{{\bf n},j} \leq n_j -1, j=1,\ldots,m,$ not all identically equal to zero,
\item[iv)] $(a_{{\bf n},0} + \sum_{j=1}^m (-1)^j a_{{\bf n},j}\widehat{s}_{1,j})(z) = \mathcal{O}(1/z^{n_1+\cdots+n_m}).$
\end{itemize}

When $m=1$ both definitions coincide with  classical diagonal Pad\'e approximation. In contrast with Pad\'e approximation, when $m \geq 2$ the uniqueness (up to constant multiples) of these polynomials is not a trivial matter and was solved  positively in \cite{FL3,FL4} for Nikishin systems. However, for arbitrary systems of functions uniqueness is not true in general.

\medskip

Notice that the ML or DR Hermite Pad\'e polynomials combine interpolation conditions of the form (ii) and (iv) and are therefore called of mixed type.

\subsection{Some auxiliary results and concepts.}

The following lemma will be used in the proof of Theorems \ref{equiv} and \ref{CTPm}. Let us define the linear forms with polynomial coefficients
\begin{equation} \label{eq:Lj}
\mathcal{L}_j := \ell_j + \sum_{k=j+1}^{m} \ell_{k} \widehat{s}_{j+1,k}, \qquad j=0,\ldots,m-1, \qquad \mathcal{L}_m = \ell_m,
\end{equation}
where the $\ell_j$ are arbitrary polynomials.

\begin{lemma} \label{lem:2} Let $(s_{1,1},\ldots,s_{1,m}) = \mathcal{N}(\sigma_1,\ldots,\sigma_m)$ be given. Then, for each $j=0,\ldots,m-2$ and $r=j+1,\ldots,m-1$
\begin{equation}
\label{eq:aux}
\mathcal{L}_j + \sum_{k=j+1}^{r} (-1)^{k-j}\widehat{s}_{k,j+1} \mathcal{L}_{k} = \ell_j + (-1)^{r-j}\sum_{k= r+1}^{m} \ell_{k} \langle s_{r+1,k}, s_{r,j+1}\widehat{\rangle}. \end{equation}
\end{lemma}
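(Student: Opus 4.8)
The plan is to prove \eqref{eq:aux} by induction on $r$, for each fixed $j \in \{0,\ldots,m-2\}$. The key algebraic fact that drives the induction is the product formula for Nikishin measures: for consecutive indices one has the identity
\[
\widehat{s}_{k,j+1}\,\widehat{s}_{j+1,r+1} = \langle s_{k,j+1}, s_{j+1,r+1}\widehat{\rangle}(z) + (\text{rational terms}),
\]
more precisely the classical relation expressing a product of two Cauchy transforms sharing an endpoint in terms of the Cauchy transform of a convolved measure (this is the mechanism behind \cite[Lemma 2.9]{FL4} and the relation \eqref{rel-fund}). Concretely, I expect to use the decomposition
\[
\widehat{s}_{k,j+1}(z)\,\widehat{s}_{j+1,r+1}(z) = \widehat{\langle s_{k,j+1},s_{j+1,r+1}\rangle}(z) + \langle s_{k,j+1},s_{j+1,r+1}\rangle\text{-type lower terms},
\]
but since all the $\mathcal{L}_k$ here are genuine linear forms (not just Cauchy transforms), the bookkeeping is cleaner if one works directly with the definition \eqref{eq:Lj} of $\mathcal{L}_k$ and tracks how multiplying by $(-1)^{r-j}\widehat{s}_{r,j+1}$ against $\mathcal{L}_r$ shifts the indices.

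\textbf{Base case $r=j+1$.} Here the left-hand side of \eqref{eq:aux} is $\mathcal{L}_j - \widehat{s}_{j+1,j+1}\,\mathcal{L}_{j+1}$. Using $\mathcal{L}_j = \ell_j + \sum_{k=j+1}^m \ell_k \widehat{s}_{j+1,k}$ and $\mathcal{L}_{j+1} = \ell_{j+1} + \sum_{k=j+2}^m \ell_k \widehat{s}_{j+2,k}$, I would expand and group: the $\ell_{j+1}$ terms cancel against $\widehat{s}_{j+1,j+1}\ell_{j+1}$, and for each $k \geq j+2$ the coefficient of $\ell_k$ becomes $\widehat{s}_{j+1,k} - \widehat{s}_{j+1,j+1}\widehat{s}_{j+2,k}$. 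The point is that this equals $-\langle s_{j+2,k}, s_{j+1,j+1}\widehat{\rangle}$, i.e. $-\widehat{\langle\sigma_{j+1}, s_{j+2,k}\rangle}$ up to the convention in the statement — exactly the $r=j+1$ instance of the right-hand side, $\ell_j - \sum_{k=j+2}^m \ell_k \langle s_{j+2,k}, s_{j+1,j+1}\widehat{\rangle}$. This identity $\widehat{s}_{j+1,k} - \widehat{s}_{j+1,j+1}\widehat{s}_{j+2,k} = -\langle s_{j+2,k}, s_{j+1,j+1}\widehat{\rangle}$ is itself a two-measure product formula and I would cite or quickly re-derive it from the differential-notation definition of $\langle \cdot,\cdot\rangle$ together with the scalar identity $\frac{1}{z-x}\cdot\frac{1}{z-y} = \frac{1}{x-y}\big(\frac{1}{z-x}-\frac{1}{z-y}\big)$ integrated against the appropriate measures.

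\textbf{Inductive step.} Assume \eqref{eq:aux} holds for $r$; I want it for $r+1$, i.e. I must add the term $(-1)^{r+1-j}\widehat{s}_{r+1,j+1}\mathcal{L}_{r+1}$ to the left side and show the right side updates from $\ell_j + (-1)^{r-j}\sum_{k=r+1}^m \ell_k \langle s_{r+1,k},s_{r,j+1}\widehat{\rangle}$ to $\ell_j + (-1)^{r+1-j}\sum_{k=r+2}^m \ell_k\langle s_{r+2,k},s_{r+1,j+1}\widehat{\rangle}$. Writing $\mathcal{L}_{r+1} = \ell_{r+1} + \sum_{k=r+2}^m \ell_k\widehat{s}_{r+2,k}$, the $\ell_{r+1}$-contribution must cancel the $k=r+1$ term in the old right side, which requires $(-1)^{r-j}\langle s_{r+1,r+1}, s_{r,j+1}\widehat{\rangle} = (-1)^{r-j}\widehat{s}_{r+1,j+1}$ — true since $s_{r+1,r+1} = \sigma_{r+1}$ and $\langle \sigma_{r+1}, s_{r,j+1}\rangle$ has Cauchy transform matching the appropriate piece (again the endpoint-sharing product rule, here recognizing that $\langle s_{r+1,r+1}, s_{r,j+1}\widehat{\rangle}$ is literally $\widehat{s}_{r+1,j+1}$ by the definition of the bracket in reversed order). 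For $k \geq r+2$ the coefficient of $\ell_k$ must transform via $(-1)^{r-j}\langle s_{r+1,k},s_{r,j+1}\widehat{\rangle} + (-1)^{r+1-j}\widehat{s}_{r+1,j+1}\widehat{s}_{r+2,k} = (-1)^{r+1-j}\langle s_{r+2,k},s_{r+1,j+1}\widehat{\rangle}$, which after clearing the sign is the associativity/product identity $\widehat{s}_{r+1,j+1}\widehat{s}_{r+2,k} - \langle s_{r+1,k},s_{r,j+1}\widehat{\rangle} = \langle s_{r+2,k},s_{r+1,j+1}\widehat{\rangle}$. This is the heart of the computation and I expect it to follow from the associativity of the bracket operation $\langle\cdot,\cdot\rangle$ on Nikishin measures combined with one more application of the scalar partial-fraction identity.

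\textbf{Main obstacle.} The real work — and the one step I would be most careful about — is the index bookkeeping in the product/associativity identities, in particular tracking which measure plays the "outer" role in $\langle \cdot,\cdot\widehat{\rangle}$ and getting the signs $(-1)^{k-j}$, $(-1)^{r-j}$ consistent throughout; the underlying analytic content (products of Cauchy transforms with a shared endpoint decompose as stated) is standard and available from \cite{FL4}, so no genuinely new estimate is needed. I would also need to check at the start that all the convolved measures $\langle s_{r+1,k}, s_{r,j+1}\rangle$ appearing on the right-hand side are well defined as elements of $\mathcal{M}(\cdot)$ — this is guaranteed by the standing integrability hypotheses built into the definition of a Nikishin system in \autoref{subsec:NS}, since consecutive intervals $\Delta_\nu,\Delta_{\nu+1}$ are disjoint or share a single non-mass point.
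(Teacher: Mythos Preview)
Your proposal is correct and follows essentially the same approach as the paper: induction on $r$ for fixed $j$, with the base case handled via the two-measure identity $\widehat{s}_{j+1,k} - \widehat{s}_{j+1,j+1}\widehat{s}_{j+2,k} = -\langle s_{j+2,k}, s_{j+1,j+1}\widehat{\rangle}$ and the inductive step via the associativity $\langle s_{r+1,k}, s_{r,j+1}\rangle = \langle s_{r+1,j+1}, s_{r+2,k}\rangle$ together with one more application of that same two-measure product formula. The paper phrases both of these as instances of \eqref{rel-fund} applied to the Nikishin systems $\mathcal{N}(s_{j+1,j+1},s_{j+2,k})$ and $\mathcal{N}(s_{r+1,j+1},s_{r+2,k})$ respectively, which is exactly the content of what you describe as the ``endpoint-sharing product rule'' derivable from the partial-fraction identity.
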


\begin{proof}
Fix $j, \, 0\leq j \leq m-2,$ and let $r=j+1$. Then the left hand side of  \eqref{eq:aux} equals
\[\mathcal{L}_j - \widehat{s}_{j+1,j+1} \mathcal{L}_{j+1} = \ell_j + \sum_{k=j+2}^m \ell_{k} \left(\widehat{s}_{j+1,k} - \widehat{s}_{j+1,j+1}\widehat{s}_{j+2,k}\right).\]
Formula \eqref{rel-fund} applied to the Nikishin system of two measures $\mathcal{N}(s_{j+1,j+1},s_{j+2,k})$ gives
\[\langle {s}_{j+1,j+1},s_{j+2,k} \widehat{\rangle} - \widehat{s}_{j+1,j+1}\widehat{s}_{j+2,k} + \langle s_{j+2,k},{s}_{j+1,j+1} \widehat{\rangle} \equiv 0.\]
However, $s_{j+1,k} = \langle s_{j+1,j+1},s_{j+2,k}\rangle$, hence
\[\mathcal{L}_j - \widehat{s}_{j+1,j+1} \mathcal{L}_{j+1} = \ell_j - \sum_{k=j+2}^m \ell_{k} \langle s_{j+2,k},{s}_{j+1,j+1} \widehat{\rangle}\]
as needed. For $j=m-2$ the proof is complete.

\medskip

Now, fix $j < m-2$ and suppose that \eqref{eq:aux} is true for some $r,\, j+1 \leq r \leq m-2,$ and let us prove that it also holds for $r+1$. Using the induction hypothesis, we obtain
\[\mathcal{L}_j + \sum_{k=j+1}^{r+1} (-1)^{k-j}\widehat{s}_{k,j+1} \mathcal{L}_{k} =  \mathcal{L}_j + \sum_{k=j+1}^{r} (-1)^{k-j}\widehat{s}_{k,j+1} \mathcal{L}_{k} + (-1)^{r+1-j} \widehat{s}_{r+1,j+1} \mathcal{L}_{r+1}=  \]
\[ \ell_j + (-1)^{r-j}\sum_{k= r+1}^{m} \ell_{k} \langle s_{r+1,k}, s_{r,j+1}\widehat{\rangle} + (-1)^{r+1-j} \widehat{s}_{r+1,j+1} \mathcal{L}_{r+1} = \]
\[\ell_j + (-1)^{r-j}\ell_{r+1}  \widehat{s}_{r+1,j+1} + (-1)^{r-j}\sum_{k= r+2}^{m} \ell_{k} \langle s_{r+1,k}, s_{r,j+1}\widehat{\rangle} + (-1)^{r+1-j} \widehat{s}_{r+1,j+1} \mathcal{L}_{r+1} = \]
\[ \ell_j +  (-1)^{r-j}\sum_{k= r+2}^{m} \ell_{k} \left(\langle s_{r+1,j+1}, s_{r+2,k}\widehat{\rangle} - \widehat{s}_{r+1,j+1} \widehat{s}_{r+2,k}\right)  =\]
\[\ell_j +  (-1)^{r+1-j}\sum_{k= r+2}^{m} \ell_{k} \langle s_{r+2,j+1}, s_{r+1,j+1} \widehat{\rangle},\]
as claimed. In the second last step, we use  the identity
\[\langle s_{r+1,k}, s_{r,j+1} {\rangle} = \langle \langle s_{r+1,r+1}, s_{r+2,k} \rangle, s_{r,j+1} {\rangle} = \langle \langle s_{r+1,r+1}, s_{r,j+1}  \rangle, s_{r+2,k} {\rangle} = \langle s_{r+1,j+1}, s_{r+2,k} {\rangle}, \]
while in the last one we use
\[\langle s_{r+1,j+1}, s_{r+2,j+1}\widehat{\rangle} - \widehat{s}_{r+1,j+1} \widehat{s}_{r+2,k} + \langle s_{r+2,j+1}, s_{r+1,j+1} \widehat{\rangle} \equiv 0, \]
which is formula \eqref{rel-fund} applied to the Nikishin system of two measures $\mathcal{N}({s}_{r+1,j+1},s_{r+2,k})$.
\end{proof}

\medskip

We will make frequent use of \cite[Theorem 1.3]{LS}. For convenience of the reader, we state it here as a lemma.

\begin{lemma} \label{reduc} Let $(s_{1,1},\ldots,s_{1,m}) = \mathcal{N}(\sigma_1,\ldots,\sigma_m)$ be given. Assume that there exist polynomials with real coefficients $\ell_0,\ldots,\ell_m$ and a polynomial $w$ with real coefficients whose zeros lie in $\mathbb{C} \setminus \Delta_1$  such that
\[\frac{\mathcal{L}_0(z)}{w(z)} \in \mathcal{H}(\mathbb{C} \setminus \Delta_1)\qquad \mbox{and} \qquad \frac{\mathcal{L}_0(z)}{w(z)} = \mathcal{O}\left(\frac{1}{z^N}\right), \quad z \to \infty,
\]
where $\mathcal{L}_0  := \ell_0 + \sum_{k=1}^m \ell_k  \widehat{s}_{1,k} $ and $N \geq 1$. Let $\mathcal{L}_1  := \ell_1 + \sum_{k=2}^m \ell_k  \widehat{s}_{2,k} $. Then
\begin{equation} \label{eq:3}
\frac{\mathcal{L}_0(z)}{w(z)} = \int \frac{\mathcal{L}_1(x)}{(z-x)} \frac{d\sigma_1(x)}{w(x)}.
\end{equation}
If $N \geq 2$, we also have
\begin{equation} \label{eq:4}
\int x^{\nu}  \mathcal{L}_1(x)  \frac{d\sigma_1(x)}{w(x)} = 0, \qquad \nu = 0,\ldots, N -2.
\end{equation}
In particular, $\mathcal{L}_1$ has at least $N -1$ sign changes in  $\stackrel{\circ}{\Delta}_1 $.
\end{lemma}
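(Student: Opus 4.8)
The plan is to push the problem down to the first ``level'' of the Nikishin system: peel off $\sigma_1$ so that $\mathcal{L}_0/w$ becomes a genuine Cauchy transform on $\Delta_1$, and then read off \eqref{eq:3}, \eqref{eq:4} and the sign changes from the behaviour at $\infty$. This is the argument behind \cite[Theorem 1.3]{LS}. First I would use $s_{1,1}=\sigma_1$ together with the defining relation $s_{1,k}=\langle\sigma_1,s_{2,k}\rangle$, i.e. $d s_{1,k}(x)=\widehat{s}_{2,k}(x)\,d\sigma_1(x)$ for $k\ge 2$, to write
\[
\mathcal{L}_0(z)=\ell_0(z)+\ell_1(z)\int\frac{d\sigma_1(x)}{z-x}+\sum_{k=2}^{m}\ell_k(z)\int\frac{\widehat{s}_{2,k}(x)\,d\sigma_1(x)}{z-x}.
\]
Since $\dfrac{\ell_k(z)-\ell_k(x)}{z-x}$ is a polynomial in $z$, replacing $\ell_k(z)$ by $\ell_k(x)$ under each integral alters $\mathcal{L}_0$ only by a polynomial; collecting the integral terms and recalling $\mathcal{L}_1(x)=\ell_1(x)+\sum_{k=2}^{m}\ell_k(x)\widehat{s}_{2,k}(x)$ yields
\[
\mathcal{L}_0(z)=p(z)+\int\frac{\mathcal{L}_1(x)}{z-x}\,d\sigma_1(x),
\]
for a polynomial $p$, the integral being the Cauchy transform of the finite signed measure $\mu:=\mathcal{L}_1\,d\sigma_1$ on $\Delta_1$ (finiteness uses the integrability assumptions built into the definition of the Nikishin system, in particular at a possible common endpoint of $\Delta_1$ and $\Delta_2$).

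Next I would transfer the factor $1/w$ inside the integral by the same telescoping trick applied to $w$: from $\dfrac{1}{(z-x)w(x)}=\dfrac{1}{w(z)(z-x)}+\dfrac{1}{w(z)}\cdot\dfrac{w(z)-w(x)}{w(x)(z-x)}$, with $\dfrac{w(z)-w(x)}{z-x}$ a polynomial, one gets
\[
\int\frac{\mathcal{L}_1(x)}{z-x}\,\frac{d\sigma_1(x)}{w(x)}=\frac{1}{w(z)}\Bigl(\int\frac{\mathcal{L}_1(x)}{z-x}\,d\sigma_1(x)+r(z)\Bigr)=\frac{\mathcal{L}_0(z)-p(z)+r(z)}{w(z)},
\]
where $r$ is a polynomial of degree $<\deg w$; the hypothesis that all zeros of $w$ lie in $\mathbb{C}\setminus\Delta_1$ is exactly what makes the left-hand side well defined and holomorphic there. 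Subtracting,
\[
\frac{\mathcal{L}_0(z)}{w(z)}-\int\frac{\mathcal{L}_1(x)}{z-x}\,\frac{d\sigma_1(x)}{w(x)}=\frac{p(z)-r(z)}{w(z)} .
\]
The left-hand side is holomorphic in $\mathbb{C}\setminus\Delta_1$ (by the hypothesis on $\mathcal{L}_0/w$ and by the previous line), so the rational function $(p-r)/w$, whose only possible poles are the zeros of $w$ — all outside $\Delta_1$ — has no poles at all, hence is a polynomial; since $N\ge 1$ both $\mathcal{L}_0/w$ and the Cauchy transform are $\mathcal{O}(1/z)$ at $\infty$, this polynomial vanishes at $\infty$ and is therefore identically $0$. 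This is \eqref{eq:3}.

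For the last part, assuming $N\ge 2$, I would expand \eqref{eq:3} in powers of $1/z$ at $\infty$, getting $\dfrac{\mathcal{L}_0(z)}{w(z)}=\sum_{\nu\ge 0}z^{-\nu-1}\displaystyle\int x^{\nu}\mathcal{L}_1(x)\,\frac{d\sigma_1(x)}{w(x)}$; comparing with $\mathcal{O}(1/z^{N})$ forces the first $N-1$ coefficients to vanish, which is \eqref{eq:4}. Finally, because $w$ has real coefficients and no zero on the interval $\Delta_1$ it keeps constant sign on $\Delta_1$, so $d\sigma_1/w$ is a measure of constant sign supported on infinitely many points; the classical argument then applies: if $\mathcal{L}_1$ changed sign at only $j\le N-2$ points of $\stackrel{\circ}{\Delta}_1$, multiplying $\mathcal{L}_1$ by a real polynomial of degree $j$ with exactly those zeros would give a function of constant sign on $\Delta_1$ whose integral against $d\sigma_1/w$ is $0$ by \eqref{eq:4}, a contradiction. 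Hence $\mathcal{L}_1$ has at least $N-1$ sign changes in $\stackrel{\circ}{\Delta}_1$.

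The steps are all elementary once the problem is reduced to the Cauchy-transform level, and the identity \eqref{rel-fund} does not even enter directly here. The only genuinely delicate points are bookkeeping: checking that $\mathcal{L}_1\,d\sigma_1$ is finite and that all integrals converge (covered by the standing integrability assumptions on the generating measures), and ensuring the degree and decay estimates in the third paragraph are sharp enough to force $p\equiv r$; I would regard the latter — keeping precise track of the polynomial parts produced by the two telescoping identities — as the main place to be careful.
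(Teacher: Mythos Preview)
The paper does not actually prove this lemma: it is quoted verbatim from \cite[Theorem~1.3]{LS} and stated without proof, so there is no ``paper's own proof'' to compare against. Your argument is the standard one (and, as you note, is precisely the argument of \cite{LS}): rewrite $\mathcal{L}_0$ as a polynomial plus the Cauchy transform of $\mathcal{L}_1\,d\sigma_1$ via the telescoping identity $\ell_k(z)=\ell_k(x)+(z-x)\cdot(\text{polynomial})$, move the factor $1/w$ inside by the analogous identity for $w$, and kill the leftover rational function by a Liouville argument using the decay at $\infty$; then \eqref{eq:4} and the sign changes follow from the asymptotic expansion of a Cauchy transform. All steps are sound, and your flagged caveats (integrability of $\mathcal{L}_1\,d\sigma_1$ in the unbounded/touching case, and tracking the polynomial remainders) are exactly the right places to be careful.
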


Let us advance the following partial result.

\begin{lemma} \label{degree}
For each fixed $n\in \mathbb{N}$, the ML Hermite-Pad\'e polynomial $a_{n,m}$ has degree $n$ or it is identically equal to zero.
\end{lemma}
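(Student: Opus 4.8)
The plan is to argue by contradiction: suppose $\deg a_{n,m} \le n-1$, and derive that the whole vector $(a_{n,0},\dots,a_{n,m})$ vanishes, contradicting the defining requirement that it be non-null. The natural tool is Lemma \ref{reduc}, applied repeatedly down the tower of Nikishin systems. First I would set $\ell_j = a_{n,j}$ for $j=0,\dots,m$ in the notation \eqref{eq:Lj}, so that $\mathcal{L}_0 = \mathcal{A}_{n,0}$ is exactly the left-hand side of \eqref{tipoIam} and, by the interpolation condition, $\mathcal{L}_0(z) = \mathcal{O}(1/z^{n+1})$ at infinity, with $\mathcal{L}_0 \in \mathcal{H}(\mathbb{C}\setminus\Delta_1)$. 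Taking $w \equiv 1$ in Lemma \ref{reduc} with $N = n+1$, I get that $\mathcal{L}_1 = \mathcal{A}_{n,1}$ satisfies the orthogonality relations \eqref{eq:4}, namely $\int x^\nu \mathcal{A}_{n,1}(x)\,d\sigma_1(x) = 0$ for $\nu = 0,\dots,n-1$, hence $\mathcal{A}_{n,1}$ has at least $n$ sign changes in $\stackrel{\circ}\Delta_1$; moreover $\mathcal{L}_1(z) = \mathcal{O}(1/z)$ matches \eqref{tipoIbm}.

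The key step is then to \emph{iterate}: $\mathcal{L}_1 = \mathcal{A}_{n,1}$ is, up to an overall sign, the defining form for the next Nikishin system $\mathcal{N}(\sigma_2,\dots,\sigma_m)$, so Lemma \ref{reduc} applies again with the roles shifted by one. But here I must be careful about the number of interpolation conditions. At the first level I used that $\mathcal{L}_0 = \mathcal{O}(1/z^{n+1})$, which is a strong ($n+1$ conditions) condition; at subsequent levels the raw interpolation conditions \eqref{tipoIbm}--\eqref{tipoIdm} only give $\mathcal{O}(1/z)$. The point is that the orthogonality \eqref{eq:4} obtained from the previous application of Lemma \ref{reduc} upgrades this: if $\mathcal{L}_j = \mathcal{O}(1/z^{N_j})$ feeds into $\mathcal{L}_{j+1}$ with $N_j - 1$ orthogonality conditions against $\sigma_{j+1}$, then since $\mathcal{L}_{j+1} = \ell_{j+1} + \sum_{k\ge j+2}\ell_k \widehat{s}_{j+2,k}$ is holomorphic off $\Delta_{j+2}$ and the $n+1$ moment conditions at the top propagate, one shows inductively that $\mathcal{L}_{j}$ still vanishes to a high enough order. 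Concretely, I expect to prove by downward/forward induction that, for each $j$, $\mathcal{L}_j/(\text{appropriate polynomial})$ decays like $\mathcal{O}(1/z^{n+1-j})$ or similar, which at the bottom forces $\mathcal{L}_{m-1} = (-1)^{m-1}(a_{n,m-1} + a_{n,m}\widehat{s}_{m,m})$ to have many sign changes relative to the degree budget $\deg a_{n,m-1} \le n-1$.

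Once I reach the last level, $\mathcal{L}_{m-1}$ is, up to sign, $a_{n,m-1} + a_{n,m}\widehat{s}_{m,m}$, the type-I form for the single-measure Nikishin system $\mathcal{N}(\sigma_m)$. Applying Lemma \ref{reduc} one final time yields $\mathcal{L}_m = \ell_m = a_{n,m}$ orthogonal to $x^\nu\,d\sigma_m$ for $\nu = 0,\dots,(\text{count})-1$, i.e. $a_{n,m}$ has at least that many sign changes in $\stackrel{\circ}\Delta_m$. Under the standing hypothesis $\deg a_{n,m}\le n-1$, if the count of forced sign changes is $\ge n$ this forces $a_{n,m}\equiv 0$. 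Then reading the relations back upward — \eqref{tipoIdm} gives $a_{n,m-1}\equiv 0$, then \eqref{tipoIbm}--type relations give $a_{n,m-2}\equiv 0$, and so on up to $a_{n,0}\equiv 0$ — yields the contradiction with the non-null requirement. I anticipate the main obstacle to be the bookkeeping in the induction: keeping exact track of how the $n+1$ interpolation conditions at the top level translate, after each application of Lemma \ref{reduc}, into the right number of orthogonality conditions at the next level, so that at the bottom one genuinely gets at least $n$ sign changes of $a_{n,m}$ (and not $n-1$, which would be inconclusive). The identity in Lemma \ref{lem:2} is exactly the combinatorial device needed to relate the cumulative linear forms $\mathcal{L}_j + \sum (-1)^{k-j}\widehat{s}_{k,j+1}\mathcal{L}_k$ to products of measures, and I would invoke it to carry the counting through cleanly.
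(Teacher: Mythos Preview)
Your overall strategy---iterate Lemma~\ref{reduc} down the tower $\mathcal{A}_{n,0},\mathcal{A}_{n,1},\dots,\mathcal{A}_{n,m-1}$ and conclude with orthogonality relations on $a_{n,m}$---is exactly what the paper does. But the step you correctly flag as the ``main obstacle'' is genuinely unresolved in your write-up, and your tentative guess $\mathcal{O}(1/z^{n+1-j})$ is wrong in a way that would be fatal: with that decay you would arrive at the bottom with only about $n-m+1$ orthogonality conditions on $a_{n,m}$, which does not force $a_{n,m}\equiv 0$ when $\deg a_{n,m}\le n-1$.

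The mechanism that makes the bookkeeping work is this. Suppose inductively that $\mathcal{A}_{n,j}/w_{n,j}=\mathcal{O}(1/z^{n+1})$ holds in $\mathcal{H}(\mathbb{C}\setminus\Delta_{j+1})$ for some real polynomial $w_{n,j}$ with zeros off $\Delta_{j+1}$. Lemma~\ref{reduc} then gives $n$ orthogonality relations for $\mathcal{A}_{n,j+1}$ against $d\sigma_{j+1}/w_{n,j}$, hence at least $n$ sign changes of $\mathcal{A}_{n,j+1}$ in $\stackrel{\circ}{\Delta}_{j+1}$. Take $w_{n,j+1}$ to be a monic polynomial of degree exactly $n$ vanishing at $n$ of those sign changes; its zeros lie in $\Delta_{j+1}\subset\mathbb{C}\setminus\Delta_{j+2}$, so $\mathcal{A}_{n,j+1}/w_{n,j+1}\in\mathcal{H}(\mathbb{C}\setminus\Delta_{j+2})$. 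Now the crucial point: the ML interpolation condition already gives $\mathcal{A}_{n,j+1}(z)=\mathcal{O}(1/z)$, so dividing by a degree-$n$ polynomial yields $\mathcal{A}_{n,j+1}/w_{n,j+1}=\mathcal{O}(1/z^{n+1})$. The full order $n+1$ is preserved at every step, not degraded. At $j=m-1$ this gives $\int x^\nu a_{n,m}(x)\,d\sigma_m(x)/w_{n,m-1}(x)=0$ for $\nu=0,\dots,n-1$, and if $\deg a_{n,m}<n$ then $a_{n,m}$ is orthogonal to itself against a sign-definite measure, so $a_{n,m}\equiv 0$.

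Two minor points: Lemma~\ref{lem:2} plays no role here (the paper uses only Lemma~\ref{reduc} for this lemma), and the ``reading back upward'' to annihilate all $a_{n,j}$ is not needed---the statement of Lemma~\ref{degree} is just the dichotomy for $a_{n,m}$, and the rest is handled in Theorem~\ref{equiv}.
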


\begin{proof} Fix $n\in \mathbb{N}$ and let $(a_{n,0},\ldots,a_{n,m})$ be the corresponding $ML$ Hermite-Pad\'e polynomials. Everywhere below $\mathcal{A}_{n,j}, j=0,\ldots,m-1$ are the forms in Definition \ref{MTPm2} and $\mathcal{A}_{n,m} = a_{n,m}$. Let us show that for each   $j=0,\ldots,m-1,$  there exists a polynomial $w_{n,j}$ with real coefficients whose zeros lie in $\mathbb{C} \setminus \Delta_{j+1}$ such that
\begin{align}\label{partida}
\frac{\mathcal{A}_{n,j}(z)}{w_{n,j}(z)}=\mathcal{O}(1/z^{n+1}), \qquad \frac{\mathcal{A}_{n,j}(z)}{w_{n,j}(z)}\in \mathcal{H}(\mathbb{C} \setminus \Delta_{j+1}).
\end{align}

Due to \eqref{tipoIam}, if $j=0$ one can take $w_{n,0}\equiv 1$. Let us assume that the statement is true for some $j \in \{0,\ldots,m-2\}$ and let us show that it also holds for $j+1$.

\medskip

Using  \eqref{eq:4} in Lemma \ref{reduc} (on $\mathcal{N}(\sigma_{j+1},\ldots,\sigma_m)$) and \eqref{partida}, we obtain that
\[0 = \int x^{\nu} \mathcal{A}_{n,j+1}(x) \frac{d \sigma_{j+1}(x)}{w_{n,j}(x)}, \qquad \nu = 0,\ldots,n-1.\]
These orthogonality relations imply that $\mathcal{A}_{n,j+1}$ has at least $n$ sign changes on the interval $\Delta_{j+1}$. Let $w_{n,j+1}$ be a polynomial of degree $n$ with simple zeros at points of $\Delta_{j+1}$ where $\mathcal{A}_{n,j+1}$ changes sign; therefore, its zeros belong to $\mathbb{C} \setminus \Delta_{j+2}$. By the way in which $w_{n,j+1}$ was defined, we have
\[\frac{\mathcal{A}_{n,j+1}(z)}{w_{n,j+1}(z)}\in \mathcal{H}(\mathbb{C} \setminus \Delta_{j+2}).\]
Taking into account that $\deg w_{n,j+1} = n$ and since $\mathcal{A}_{n,j+1}(z) = \mathcal{O}(1/z)$ according to Definition \ref{MTPm2}, we also have ${\mathcal{A}_{n,j+1}(z)}/{w_{n,j+1}(z)} = \mathcal{O}(1/z^{n+1})$ as claimed.

\medskip

For $j=m-1$ equation  (\ref{partida}) takes the form
\begin{equation}\label{MP}
\frac{\left(a_{n,m-1}-a_{n,m}\widehat{s}_{m,m}\right)(z)}{w_{n,m-1}(z)} =\mathcal{O}(1/z^{n+1}) \in \mathcal{H}(\mathbb{C} \setminus \Delta_{m}).
\end{equation}
Using \eqref{eq:4}, we obtain
\begin{equation}\label{orto}
\int x^\nu a_{n,m}(x) \frac{d s_{m,m}(x)}{w_{n,m-1}(x)} = 0, \qquad \nu =0,\ldots,n-1.
\end{equation}

If $\deg a_{n,m} < n$, \eqref{orto} implies that $a_{n,m} \equiv 0$ since it would be orthogonal to itself (and the measure has constant sign); otherwise, \eqref{orto} implies that $a_{n,m}$ has at least $n$ sign changes on $\Delta_m$ and since $\deg a_{n,m} \leq n$ it must have exactly $n$ simple zeros inside $\Delta_m$. With this we conclude the proof of the lemma. \end{proof}

This lemma shows that $a_{n,m}$ is uniquely determined except for a constant factor (or it is identically equal to zero) because otherwise by linearity we could construct an $a_{n,m}$, not identically equal to zero, of degree smaller than $n$.

\medskip

We need some formulas verified by the Cauchy transforms of products of measures in a Nikishin system.  It is  known that for each $\sigma
\in {\mathcal{M}}(\Delta),$ where $\Delta$ is an infinite subinterval of the real line different from $\mathbb{R}$, there exists a measure $\tau \in
{\mathcal{M}}(\Delta)$ and ${\ell}(z)=a z+b, a = 1/|\sigma|, b \in {\mathbb{R}},$ such that
\begin{equation} \label{s22}
{1}/{\widehat{\sigma}(z)}={\ell}(z)+ \widehat{\tau}(z),
\end{equation}
where $|\sigma|$ is the total variation of the measure $\sigma.$  See  \cite[Appendix]{KN} for bounded $\Delta$, and \cite[Lemma 2.3]{FL4} when $\Delta$ is unbounded.
In particular,
\[
{1}/{\widehat{s}_{1,1}(z)}  ={\ell}_{1,1}(z)+
\widehat{\tau}_{1,1}(z).
\]
Sometimes we write $\langle
\sigma_{\alpha},\sigma_{\beta} \widehat{\rangle}$ in place of $\widehat{s}_{\alpha,\beta}$.
In \cite[Lemma 2.10]{FL4}, several formulas involving ratios of Cauchy transforms were proved. The most useful ones  in this paper establish that
\begin{equation} \label{4.4}
\frac{\widehat{s}_{1,k}}{\widehat{s}_{1,1}} =
\frac{|s_{1,k}|}{|s_{1,1}|} - \langle \tau_{1,1},\langle s_{2,k},s_{1,1}
\rangle \widehat{\rangle}  , \qquad  1=j < k \leq m.
\end{equation}

\medskip

Another important ingredient in the proof of Theorem \ref{CTPm} is the notion of convergence in Hausdorff content. Let $B$ be a subset of the complex plane $\mathbb{C}$. By
$\mathcal{U}(B)$ we denote the class of all coverings of $B$ by at
most a numerable set of disks. Set
$$
h(B)=\inf\left\{\sum_{i=1}^\infty
|U_i|\,:\,\{U_i\}\in\mathcal{U}(B)\right\},
$$
where $|U_i|$ stands for the radius of the disk $U_i$. The quantity
$h(B)$ is called the $1$-dimensional Hausdorff content of the
set $B$.

\medskip

Let $(\varphi_n)_{n\in\mathbb{N}}$ be a sequence of complex functions
defined on a domain $D\subset\mathbb{C}$ and $\varphi$ another
function defined on $D$ (the value $\infty$ is permitted). We say that
$(\varphi_n)_{n\in\mathbb{N}}$ converges in Hausdorff content to
the function $\varphi$ inside $D$ if for each compact
subset $\mathcal{K}$ of $D$ and for each $\varepsilon
>0$, we have
\begin{equation}
\label{convH}
\lim_{n\to\infty} h\{z\in K :
|\varphi_n(z)-\varphi(z)|>\varepsilon\}=0
\end{equation}
(by convention $\infty \pm \infty = \infty$). We denote this writing $h$-$\lim_{n\to \infty} \varphi_n =
\varphi$ inside $D$.

\medskip

If the functions $\varphi_n$ are holomorphic in $D$ and \eqref{convH} takes place, then the convergence is uniform on each compact subset of $D$. This result is proved in \cite[Lemma 1]{Gon}. Therefore, in order to prove \eqref{Con01m}-\eqref{Con00m} it is sufficient to show that the convergence takes place in Hausdorff content in the corresponding region. This is what we will do.

\subsection{Proof of Theorem \ref{equiv}.}
\begin{proof}
\noindent Fix $n \in \mathbb{N}$. Notice that relations \eqref{tipoIm} and \eqref{tipoIIam} are the same as relations \eqref{tipoIam} and \eqref{tipoIdm}, respectively.
First, let us show that Definition \ref{MTPm2}  implies Definition \ref{MTPm1}. Using \eqref{eq:aux} with $\mathcal{L}_j = \mathcal{A}_{n,j}, j \in \{0,1,\ldots,m-2\},$ and $r = m-1$, we obtain
\[\mathcal{A}_{n,j} + \sum_{k=j+1}^{m-1} (-1)^{k-j}\widehat{s}_{k,j+1} \mathcal{A}_{n,k} = (-1)^j \left(a_{n,j} -  a_{n,m} \widehat{s}_{m,j+1}\right).
\]
However, \eqref{tipoIam}-\eqref{tipoIdm} imply that the left hand side of this equation is $\mathcal{O}(1/z)$ and, therefore, so is the right hand side which is exactly the expression that appears in Definition \ref{MTPm1}. We also obtain the additional relation
\begin{equation}
\label{additional}
(a_{n,0} - a_{n,m}\widehat{s}_{m.1})(z) = \mathcal{O}(1/z).
\end{equation}
which is redundant with respect to the  equations in Definition \ref{MTPm1}, but \eqref{additional} will be needed below.

\medskip

To prove the converse, we observe that according to formula \eqref{rel-fund} applied to the Nikishin system $\mathcal{N}(\sigma_{j+1}, \sigma_{j+2},\ldots, \sigma_{m})$, for $j=0,\ldots,m-2,$ we have
\begin{equation}\label{RM}
 0\equiv (-1)^{j }\widehat{s}_{m,j+1}  + \sum_{k=j+1}^{m-1}(-1)^{k}   \widehat{s}_{m,k+1} \widehat{s}_{j+1,k} +
 (-1)^{m} \widehat{s}_{j+1, m}, \quad z \in \mathbb{C} \setminus (\Delta_{j+1} \cup \Delta_m).
\end{equation}

From Definition \ref{MTPm1} let us consider  the following equations
\begin{align}
(-1)^{j}\left(a_{n,j}-a_{n,m}\widehat{s}_{m,j+1}\right)(z)=\mathcal{O}(1/z)\label{E4},\\
..........................................................\nonumber\\
(-1)^{m-2}\left(a_{n,m-2}-a_{n,m}\widehat{s}_{m,m-1}\right)(z)=\mathcal{O}(1/z)\label{E2},\\
(-1)^{m-1}\left(a_{n,m-1}-a_{n,m}\widehat{s}_{m,m}\right)(z)=\mathcal{O}(1/z)\label{E1}.
\end{align}

Let us multiply equation (\ref{E1}) by $\widehat{s}_{j+1,m-1}$, (\ref{E2}) by $\widehat{s}_{j+1,m-2}$, and so on until we arrive to (\ref{E4}) multiplied by $1$. Adding up all the relations so obtained, we arrive at
\begin{align}
(-1)^{j} a_{n,j}+(-1)^{j+1}a_{n,j+1}\widehat{s}_{j+1,j+1}+\ldots + (-1)^{m-1}a_{n,m-1} \widehat{s}_{j+1,m-1} \nonumber\\
-a_{n,m}\left((-1)^{j}\widehat{s}_{m,j+1}+\sum_{k=j+1}^{m-1}(-1)^{k}   \widehat{s}_{m,k+1} \widehat{s}_{j+1,k}\right)=\mathcal{O}(1/z).
\end{align}
Using relation (\ref{RM}) to replace what is inside the big parenthesis,  the   $j$-th equation in Definition \ref{MTPm2} follows immediately; that is
$\mathcal{A}_{n,j}=\mathcal{O}(1/z).$
Therefore, DR and ML Hermite-Pad\'e polynomials coincide.

\medskip

Observe that \eqref{additional} and \eqref{tipoIIcm}-\eqref{tipoIIam} imply that once $a_{n,m}$ is found then $a_{n,j}, j=0,\ldots,m-1,$ is uniquely determined as the polynomial part of the asymptotic expansion at $\infty$ of $a_{n,m}\widehat{s}_{m,j+1}$. In particular, this means that if $a_{n,j}^{n-1},$ is the coefficient corresponding to the power $z^{n-1}$ of  $a_{n,j}$ and
$a_{n,m}^{n}$ the coefficient corresponding to the power $z^{n}$ of  $a_{n,m}$ then
		$$a_{n,j}^{n-1} = a_{n,m}^{n}|s_{m,j+1}|, \qquad j=1,\ldots,m-1. $$
From Lemma \ref{degree} we know that $\deg a_{n,m} = n$, therefore, $\deg a_{n,j} = n-1, j=0,\ldots,m-1$ and the polynomials $a_{n,j},j=0,\ldots,m-1$ are uniquely defined up to a constant factor (the same constant as for $a_{n,m}$). Moreover, \eqref{MP} implies that  $\frac{a_{n,m-1}}{a_{n,m}}$ is an $n$-th diagonal multipoint Pad\'e approximation of
$\widehat{s}_{m,m}$ with $n+1$ interpolation conditions at $\infty$ and another $n$ located at the zeros of $w_{n,m-1}$ and $a_{n,m-1}$ is the $n$-th polynomial of the second lind with respect to the measure $d\sigma_m/w_{n,m-1}$ whose $n-1$ zeros are known to interlace the $n$ simple zeros of $a_{n,m}$.
With this we conclude the proof. \end{proof}

The property of the degrees of the polynomials $a_{n,j}, j=0,\ldots,m,$ indicates that the multi-indices $(n,\ldots,n,n+1) \in \mathbb{Z}_+^m \setminus \{{\bf 0}\}$ are normal (for the definition of normality see, for example, the introduction in \cite{FL3}). This is emphasized in the next result.

\begin{corollary} \label{misc}
For each fixed $n\in \mathbb{N}$ we have
\begin{enumerate}
\item[(a)] $\mathcal{A}_{n,0}$ has no zero in $\mathbb{C} \setminus \Delta_1$. The coefficient accompanying $1/z^{n+1}$ in the asymptotic expansion \eqref{tipoIam} is different from zero.
\item[(b)] $\mathcal{A}_{n,j}, j=1,\ldots,m$ has exactly $n$ zeros in $\mathbb{C} \setminus \Delta_{j+1}\,\,(\Delta_{m+1} = \emptyset)$, they are all simple and lie in $\stackrel{\circ}\Delta_j$. The coefficient accompanying $1/z$ in the asymptotic expansions \eqref{tipoIbm}-\eqref{tipoIdm} is different from zero.
\end{enumerate}
\end{corollary}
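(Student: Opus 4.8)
The corollary is essentially a bookkeeping consequence of the proofs of Lemma \ref{degree} and Theorem \ref{equiv}, so I would extract the relevant facts from those arguments rather than start from scratch. First, part (a): by \eqref{tipoIam} we have $\mathcal{A}_{n,0}(z) = \mathcal{O}(1/z^{n+1})$ and $\mathcal{A}_{n,0} \in \mathcal{H}(\mathbb{C}\setminus\Delta_1)$ (since it is a polynomial combination of $\widehat s_{1,k}$, all holomorphic off $\Delta_1$). Apply \eqref{eq:3} in Lemma \ref{reduc} with $w\equiv 1$, $N = n+1$: this writes $\mathcal{A}_{n,0}(z) = \int \mathcal{A}_{n,1}(x)\,d\sigma_1(x)/(z-x)$, which is the Cauchy transform of the (signed) measure $\mathcal{A}_{n,1}\,d\sigma_1$. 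Since $\mathcal{A}_{n,1} = -a_{n,1}+\cdots$ is not identically zero (the whole vector is nonzero and $a_{n,1},\dots,a_{n,m}$ are pinned down by $a_{n,m}\not\equiv 0$, which holds by Lemma \ref{degree} together with the uniqueness remark) and has, by the orthogonality \eqref{eq:4}, at least $n$ sign changes on $\stackrel{\circ}\Delta_1$ while $\deg(\mathcal{A}_{n,1}\text{'s relevant polynomial part})$ bounds the number of sign changes — actually one only needs that $\mathcal{A}_{n,1}\,d\sigma_1$ is a nonzero signed measure with constant sign outside finitely many points, hence its Cauchy transform $\mathcal{A}_{n,0}$ cannot vanish anywhere in $\mathbb{C}\setminus\Delta_1$ (a nonzero measure of constant sign has Cauchy transform with no zeros off the support; the $n$ sign changes are exactly compensated by the polynomial factor). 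The statement that the coefficient of $1/z^{n+1}$ is nonzero is then $\int x^{n}\mathcal{A}_{n,1}\,d\sigma_1 \neq 0$, equivalently that $\mathcal{A}_{n,1}$ has exactly $n$ (not more) sign changes, which follows because its polynomial part has degree $\le n-1$ so it changes sign at most $n-1$ times — wait, this needs care; I will instead argue via \eqref{MP} and the chain in Lemma \ref{degree} that the $1/z^{n+1}$ coefficient propagates to a nonzero quantity $h_n$, using that $a_{n,m}$ has exactly degree $n$ and exactly $n$ simple zeros, none of which are orthogonality-killing.

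**The inductive core.** For part (b) I would run an induction on $j$ from $j=m$ down, or better up from $j=1$, mirroring the proof of Lemma \ref{degree}. At each level $j$, \eqref{partida} gives $\mathcal{A}_{n,j}/w_{n,j-1} = \mathcal{O}(1/z^{n+1})$ and holomorphic off $\Delta_j$; applying \eqref{eq:3} of Lemma \ref{reduc} (to the Nikishin system $\mathcal{N}(\sigma_j,\dots,\sigma_m)$) expresses $\mathcal{A}_{n,j-1}/w_{n,j-1}$ as the Cauchy transform of $\mathcal{A}_{n,j}\,d\sigma_j/w_{n,j-1}$, and \eqref{eq:4} gives the orthogonality. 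The measure $\mathcal{A}_{n,j}\,d\sigma_j/w_{n,j-1}$ has constant sign off finitely many points (because $w_{n,j-1}$ has all its zeros in $\mathbb{C}\setminus\Delta_j$), $\mathcal{A}_{n,j}$ has exactly $n$ sign changes in $\stackrel{\circ}\Delta_j$ (at least $n$ from \eqref{eq:4} with $\nu=0,\dots,n-1$; at most $n$ because... see below). Here is the key structural point: $\mathcal{A}_{n,j}$, viewed through the reduction, equals a polynomial times the Cauchy transform of a positive-type object, and one shows its polynomial part is $a_{n,j}$ up to the leading term, hence degree $\le n-1$ for $j<m$ and degree $n$ for $j=m$; but actually the cleanest route is: $\mathcal{A}_{n,j}$ has at most $n$ zeros in all of $\mathbb{C}\setminus\Delta_{j+1}$ by an argument-principle / Cauchy-transform count (the function $\mathcal{A}_{n,j}/w_{n,j}$, being $\mathcal{O}(1/z^{n+1})$ and of the form constant-sign-measure Cauchy transform, has no zeros off $\Delta_{j+1}$, and $w_{n,j}$ contributes exactly $n$), so combined with "$\ge n$ from orthogonality" one gets "exactly $n$, all simple, all in $\stackrel{\circ}\Delta_j$." The nonvanishing of the $1/z$-coefficient in \eqref{tipoIbm}--\eqref{tipoIdm} is then the statement $\int \mathcal{A}_{n,j}\,d\sigma_j/w_{n,j-1} \neq 0$; if it were zero, $\mathcal{A}_{n,j}/w_{n,j-1}$ would be $\mathcal{O}(1/z^2)$ with still only $n$ sign changes on $\Delta_j$, forcing (Lemma \ref{reduc} with $N=n+2$) at least $n+1$ sign changes — a contradiction. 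For $j=m$, $\mathcal{A}_{n,m}=a_{n,m}$ has degree exactly $n$ by Lemma \ref{degree}, and the constant-sign measure $d s_{m,m}/w_{n,m-1}$ together with \eqref{orto} forces exactly $n$ simple zeros in $\stackrel{\circ}\Delta_m$; the $1/z$ coefficient of \eqref{tipoIdm} is $-\int a_{n,m}\,ds_{m,m}/w_{n,m-1} + (\text{something})$, nonzero for the same "$n+1$ sign changes" reason, or directly because it is $h_n\neq 0$ after renormalization.

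**Expected obstacle.** The routine parts are the reductions via Lemma \ref{reduc}; the delicate point — and the one I would write out carefully — is the upper bound "$\mathcal{A}_{n,j}$ has at most $n$ zeros in $\mathbb{C}\setminus\Delta_{j+1}$" and the passage between "sign changes of $\mathcal{A}_{n,j}$ on $\Delta_j$" and "zeros of $\mathcal{A}_{n,j}$ off $\Delta_{j+1}$." This requires knowing that $\mathcal{A}_{n,j}$, as a function on $\mathbb{C}\setminus\Delta_{j+1}$, is (a polynomial factor coming from $w$ times) the Cauchy transform of a signed measure that is of constant sign except at finitely many points, and then invoking the standard fact that such a Cauchy transform has zeros only at sign changes of the measure, with multiplicities matching. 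One must also be sure the zeros do not leak to $\partial\Delta_j$ or to a common endpoint $x_{j,j+1}$; this is handled by the standing assumption that $x_{j,j+1}$ is not a mass point and by the fact that $w_{n,j}$ was chosen with simple zeros strictly inside $\Delta_j$. Once these counting lemmas are in place, the corollary follows by assembling the per-level conclusions; I would present it as a short induction referencing the displays \eqref{partida}, \eqref{MP}, \eqref{orto} established in the proof of Lemma \ref{degree} and the reduction identities \eqref{eq:3}--\eqref{eq:4}.
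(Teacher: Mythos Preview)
You have correctly identified the crux --- the upper bound ``$\mathcal{A}_{n,j}$ has at most $n$ zeros in $\mathbb{C}\setminus\Delta_{j+1}$'' --- but your proposed resolution does not close. You want $\mathcal{A}_{n,j}/w_{n,j}$ to be the Cauchy transform of a constant-sign measure and then invoke a ``standard fact'' that such transforms have no zeros off the support. But by \eqref{eq:3} the density in that representation is $\mathcal{A}_{n,j+1}\,d\sigma_{j+1}/w_{n,j}$, and $\mathcal{A}_{n,j+1}$ changes sign at least $n$ times on $\Delta_{j+1}$: the measure is genuinely signed, and Cauchy transforms of signed measures can vanish off the support, so no such fact is available. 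Your fallback contradiction for the $1/z$-coefficient (``if it vanished, Lemma \ref{reduc} with $N=n+2$ would force $n+1$ sign changes of $\mathcal{A}_{n,j+1}$'') is circular: to declare $n+1$ sign changes a contradiction you must already possess the upper bound at level $j+1$, which is precisely what is in question.

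The paper avoids level-by-level zero counting altogether and argues by a single global contradiction: assume \emph{any} of the asserted properties fails at \emph{some} level $j_0\in\{0,\ldots,m-1\}$. Combining the real symmetry of $\mathcal{A}_{n,j_0}$ with the $n$ sign changes on $\Delta_{j_0}$ already obtained in the proof of Lemma \ref{degree} (for $j_0\geq 1$), one builds a real polynomial $w_{n,j_0}$ with zeros in $\mathbb{C}\setminus\Delta_{j_0+1}$ such that $\mathcal{A}_{n,j_0}/w_{n,j_0}=\mathcal{O}(1/z^{n+2})\in\mathcal{H}(\mathbb{C}\setminus\Delta_{j_0+1})$. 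Now rerun the inductive step of Lemma \ref{degree} from $j_0$ onward with this improved exponent: the extra order of decay propagates to every $j\geq j_0$, and at the terminal level it produces $n+1$ orthogonality relations in \eqref{orto}, forcing $a_{n,m}\equiv 0$ and hence (Theorem \ref{equiv}) the whole vector $(a_{n,0},\ldots,a_{n,m})$ vanishes --- contradiction. The point is that only at $j=m$ is there an a priori upper bound (the degree bound $\deg a_{n,m}\leq n$), so any hypothetical excess must be transported there before it can be killed. You actually brushed against this propagation idea in your treatment of part (a) (``argue via \eqref{MP} and the chain in Lemma \ref{degree}''), but did not recognize it as the mechanism for the entire corollary.
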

\begin{proof} The forms $\mathcal{A}_{n,j}$ are symmetric with respect to the real line (that is $\mathcal{A}_{n,j}(\overline{z}) = \overline{\mathcal{A}_{n,j}( {z})}$); therefore, its non real zeros come in conjugate pairs. For $j=m, \mathcal{A}_{n,m}= a_{n,m}$ and the property stated in $(b)$ about the zeros was proved in Theorem \ref{equiv}.  Suppose that for some $\overline{\j} \in \{0,\ldots,m-1\}$ any one of the properties stated in $(a)$ or $(b)$ fails. From the proof of Lemma \ref{degree}, we know that $\mathcal{A}_{n,\overline{\j}}, \overline{\j} = 1,\ldots,m-1,$ has at least $n$ sign changes on $\Delta_{\overline{\j}}$, so it can have more but not less than $n$ zeros in $\mathbb{C} \setminus \Delta_{\overline{\j}}$. Thus, there exists a polynomial $w_{n,\overline{\j}}$ of degree $\geq n$ or $0$ if $\overline{\j} = 0$, with real coefficients, such that
\begin{equation}\label{extra}\mathcal{A}_{n,\overline{\j}}/w_{n,\overline{\j}} = \mathcal{O}(1/z^{n+2}) \in \mathcal{H}(\mathbb{C}\setminus \Delta_{\overline{\j}}).
\end{equation}
Arguing as in the proof of Lemma \ref{degree} it readily follows  that for $j = \overline{\j},\ldots,m-1$ one also has \eqref{extra}. This entails that for some polynomial $w_{n,m-1}$, with real coefficients,
\[
\int x^\nu a_{n,m}(x) \frac{d s_{m,m}(x)}{w_{n,m-1}(x)} = 0, \qquad \nu =0,\ldots,n.
\]
This implies that $a_{n,m} \equiv 0$. This is impossible because according to Theorem \ref{equiv} we would also have $a_{n,j} \equiv 0, j=0,\ldots,m-1$. Thus, all properties stated in this corollary must hold.
\end{proof}

\subsection{Proof of Theorem \ref{TBIO}.}

\begin{proof}
	For $k=0,\ldots,m-1$, set
\[\mathcal{B}_{k,j}(z) = (-1)^j b_{k,j}+ (-1)^{j+1}b_{k,j+1}\widehat{s}_{m-j,m-j}\cdots (-1)^{m}b_{k,m}\widehat{s}_{m-j,1} \]
First, let us analyze the case $n < k$. From equation \eqref{dtipoIam} and  \eqref{eq:4} it follows that
	for $ \nu=0,1,\ldots k-1$
	\begin{equation} \label{bio1}
	0=\int_{\Delta_m} x_m^{\nu}\,\mathcal{B}_{k,1}(x_m)d\sigma_{m}(x_m).
	\end{equation}
Using consecutively	\eqref{dtipoIbm}-\eqref{dtipoIdm} and \eqref{eq:3}, we have
	\begin{equation} \label{bio2}
\mathcal{B}_{k,1}(x_m) =
\int_{\Delta_{m-1}} \mathcal{B}_{k,2}(x_{m-1}) \frac{d\sigma_{m-1}(x_{m-1})}{(x_m-x_{m-1})} = \cdots =
\end{equation}
\[
\int_{\Delta_{m-1}}\ldots\int_{\Delta_2} \mathcal{B}_{k,m-1}(x_{2}) \frac{d\sigma_{2}(x_{2}) }{(x_{3}-x_{2})}\cdots\frac{d\sigma_{m-1}(x_{m-1}) }{(x_{m}-x_{m-1})} =
\]
\[
 \int_{\Delta_{m-1}}\cdots\int_{\Delta_1} (-1)^m b_{k,m}(x_{1}) \frac{d\sigma_{1}(x_{1}) }{(x_{2}-x_{1})}\cdots\frac{d\sigma_{m-1}(x_{m-1}) }{(x_{m}-x_{m-1})} = - \int_{\Delta_1} b_{k,m}(x_{1}) K(x_1,x_m) d\sigma_1(x_1).
\]
In the last equality, we use Fubini's theorem and the definition of the kernel $K(x_1,x_m)$. Combining \eqref{bio1}, \eqref{bio2} and using the fact that $n < k$, we get
$$
\int_{\Delta_m}\int_{\Delta_1} a_{n,m}(x_m)K(x_1,x_m)b_{k,m}(x_1)  d\sigma_1(x_1) d\sigma_m(x_m) =0, \qquad n < k.
$$
For $k < n$, the proof is the same as above applied to the forms $\mathcal{A}_{n,j}$ instead of the forms $\mathcal{B}_{k,j}$.

Now, suppose that
\begin{equation}\label{bio3}
\int_{\Delta_m}\int_{\Delta_1} a_{n,m}(x_m)K(x_1,x_m)b_{n,m}(x_1)  d\sigma_1(x_1) d\sigma_m(x_m) = 0.
\end{equation}
Obviously, $\deg b_{k,m} = k, k \geq 0$. This, together with the orthogonality relations and  \eqref{bio3} give
\begin{equation}\label{bio4}
\int_{\Delta_1} x_1^n \int_{\Delta_m}   a_{n,m}(x_m)K(x_1,x_m)  d\sigma_m(x_m) d\sigma_1(x_1) = 0.
\end{equation}
On the other hand, just as in the proof of \eqref{bio2} we have
\begin{equation}\label{bio5}
\mathcal{A}_{n,1}(x_1) = \int_{\Delta_m}   a_{n,m}(x_m)K(x_1,x_m)  d\sigma_m(x_m).
\end{equation}
Now, from \eqref{bio4} and \eqref{bio5} we obtain
\[\int_{\Delta_1} x_1^n \mathcal{A}_{n,1}(x_1) d\sigma_1(x_1) = \int_{\Delta_1} x_1^n \int_{\Delta_m}   a_{n,m}(x_m)K(x_1,x_m)  d\sigma_m(x_m) = 0.
\]
We also know that
\[\int_{\Delta_1} x_1^\nu \mathcal{A}_{n,1}(x_1) d\sigma_1(x_1) =  0,\qquad \nu=0,\ldots,n-1.
\]
However, these orthogonality relations imply that  the form $\mathcal{A}_{n,1}$ has at least $n+1$ sign changes on $\Delta_1$ but this is impossible since in Corollary \ref{misc} we showed that it has  exactly $n$ zeros in $\mathbb{C} \setminus \Delta_2$. Therefore,
\[\int_{\Delta_m}\int_{\Delta_1} a_{n,m}(x_m)K(x_1,x_m)b_{n,m}(x_1)  d\sigma_1(x_1) d\sigma_m(x_m) \neq 0
\]
and we conclude the proof.
\end{proof}

\subsection{Proof of Theorem \ref{CTPm}.}
\begin{proof}
In the proof of Theorem \ref{equiv} it was indicated that ${a_{n,m-1}}/{a_{n,m}}$ is an $n$-th diagonal multipoint Pad\'e approximation of
$\widehat{s}_{m,m}$. Since either the sequence of moments of $\sigma_m$ satisfies Carleman's condition or $\Delta_{m-1}$ is a finite interval contained in $\mathbb{C} \setminus \Delta_m$, using \cite[Theorem 1]{lago} we have
\begin{equation} \label{multipoint}
\lim_{n\rightarrow \infty}  \frac{a_{ n,m-1}}{a_{ n,m}} =  \widehat{s}_{m,m},
\end{equation}
uniformly on each compact subset $\mathcal{K} \subset \mathbb{C} \setminus \Delta_m$.

\medskip

Now,  consider the case $j \in \{0,\ldots,m-2\}$. Having in mind \eqref{Con01m}, we need to reduce $\mathcal{A}_{ n,j}$ so as to eliminate all $a_{n,k}, k=j+1,\ldots,m-1$. We start out eliminating $a_{n,j+1}$. Consider the ratio $\mathcal{A}_{ n,j}/\widehat{s}_{j+1,j+1}$. Using \eqref{s22} and \eqref{4.4} we obtain that
\[ \frac{\mathcal{A}_{ n,j}}{\widehat{s}_{j+1,j+1}} =    \left((-1)^j p_{j+1,j+1} a_{ n,j}+ \sum_{k=j+1}^m \frac{(-1)^k|s_{j+1,k}|}{|s_{j+1,j+1}|} a_{n,k} \right) + (-1)^ja_{n,j}\widehat{\tau}_{j+1,j+1} -
\]
\[\sum_{k=j+2}^m (-1)^ka_{n,k} \langle  {\tau}_{j+1,j+1}, \langle s_{j+2,k}, s_{j+1,j+1} \rangle \widehat{\rangle},
\]
has the form of $\mathcal{L}_0$ in Lemma \ref{reduc} (with respect to $\mathcal{N}(\tau_{j+1,j+1},s_{j+2,j+1},\sigma_{j+3},\ldots,\sigma_m)$, when $j+3\leq m$, or $\mathcal{N}(\tau_{j+1,j+1},s_{j+2,j+1})$ if $j=m-2$). Notice that $ {\mathcal{A}_{ n,j}}/({\widehat{\sigma}_{j+1}w_{n,j}}) \in \mathcal{H}(\mathbb{C} \setminus \Delta_{j+1})$, and
\[\frac{\mathcal{A}_{ n,j}}{\widehat{s}_{j+1,j+1}w_{n,j}} \in \mathcal{O}\left(\frac{1}{z^{n}}\right), \qquad z \to \infty.
\]
From \eqref{eq:4} of Lemma \ref{reduc}, we obtain that for $\nu = 0,\ldots,n-2$
\[ 0 = \int x^{\nu} \left( (-1)^ja_{ n,j}(x) - \sum_{k=j+2}^m (-1)^k a_{ n,k}  \langle s_{j+2,k}, s_{j+1,j+1}  \widehat{\rangle}(x) \right)\frac{d\tau_{j+1,j+1}(x)}{w_{n,j}(x)}
\]
which implies that the function in parenthesis under the integral sign has at least $n-1$ sign changes in $\stackrel{\circ}{\Delta}_{j+1} $. In turn, it follows that there exists  a polynomial $\widetilde{w}_{ n,j }, \deg \widetilde{w}_{ n,j } = n-1$, whose zeros are simple and lie in $\stackrel{\circ}{\Delta}_{j+1} $ such that
\begin{equation}
 \label{eq:5}
 \frac{(-1)^ja_{ n,j}  - \sum_{k=j+2}^m (-1)^ka_{ n,k}  \langle s_{j+2,k}, s_{j+1,j+1}  \widehat{\rangle} }{\widetilde{w}_{n,j }} \in \mathcal{H}(\mathbb{C} \setminus \Delta_{j+2}).
\end{equation}

\medskip

On the other hand, using \eqref{eq:aux} with $r=j+1$ and Definition \ref{MTPm2}, we obtain
\begin{align*}
(\mathcal{A}_{ n,j}-\widehat{s}_{j+1,j+1}\mathcal{A}_{ n,j+1})(z)=\mathcal{O}\left(\frac{1}{z}\right)\\
=\left((-1)^ja_{ n,j}  - \sum_{k=j+2}^m (-1)^k a_{ n,k}  \langle s_{j+2,k}, s_{j+1,j+1}  \widehat{\rangle}\right)(z).
\end{align*}
Consequently,
\begin{align} \label{eq:6}
 \frac{(-1)^ja_{n,j}  - \sum_{k=j+2}^m (-1)^k a_{ n,k}  \langle s_{j+2,k}, s_{j+1,j+1}  \widehat{\rangle} }{\widetilde{w}_{n,j }} = \mathcal{O}\left(\frac{1}{z^{n}}\right), \qquad z \to \infty.
\end{align}
Notice that $\langle s_{j+2,k}, s_{j+1,j+1}{\rangle} = s_{j+2,j+1} $ when $k=j+2$ and $\langle s_{j+2,k}, s_{j+1,j+1}{\rangle} = \langle s_{j+2,j+1},s_{j+3,k}\rangle$ when $j+3\leq k \leq m$ (if any).

\medskip

Suppose that $j=m-2$. In this case, \eqref{eq:5}-\eqref{eq:6} reduce to
\[ \frac{a_{ n,m-2}  -  a_{ n,m} \widehat{s}_{m,m-1} }{\widetilde{w}_{n,m-2 }} = \mathcal{O}\left(\frac{1}{z^{n}}\right) \in \mathcal{H}(\mathbb{C} \setminus \Delta_{m}). \]
In comparison with the case when $j=m-1$ we lose one interpolation condition at infinity and we say that ${a_{n,m-2}}/{a_{n,m}}$ is an incomplete diagonal Pad\'e approximation of $s_{m,m-1}$. However, using \cite[Lemma 2]{Bus} we can assert that
\[ h- \lim_{n\to \infty} \frac{a_{n,m-2}}{a_{n,m}} = \widehat{s}_{m,m-1}\]
inside $\mathbb{C} \setminus \Delta_m$. Since the poles of ${a_{n,m-2}}/{a_{n,m}}$ lie in $\Delta_m$, from \cite[Lemma 1]{Gon}  uniform convergence on compact subsets of $\mathbb{C} \setminus \Delta_{m}$ readily follows.

\medskip

Incidentally, if $m=2$ and $j=0$, on the right hand side of \eqref{eq:6} we have $\mathcal{O}(1/z^{n+1})$ because $\mathcal{A}_{n,0} - \widehat{s}_{1,1} \mathcal{A}_{n,1} = \mathcal{O}(1/z^2)$. So, in this case $a_{n,0}/a_{n,2}$ is a complete multipoint Pad\'e approximation of $\widehat{s}_{2,1}$. Then $a_{n,2}$ is (also) an $n$-th orthogonal polynomial with respect to the varying measure $d s_{2,1}/\widetilde{w}_{n,0}$ and $a_{n,0}$ is the associated polynomial of second kind which implies that   $a_{n,0}$ has $n-1$ simple zeros which lie in $ \stackrel{\circ}\Delta_m$ (and interlace the zeros of $a_{n,2}$). For other values of $m$, we discuss later the degree and location of the zeros of $a_{n,j}, j=0,\ldots,m-2$.

\medskip

Let us assume that $m \geq 3$ and $0 \leq j \leq m-3$. Then, $\langle s_{j+2,k}, s_{j+1,j+1} \rangle = \langle s_{j+2,j+1} , s_{j+3,k} \rangle, k=j+3,\ldots,m,$ and we use this equality to modify the corresponding terms in the numerators of the left hand sides of \eqref{eq:5} and \eqref{eq:6} which becomes
\[(-1)^j a_{n,j}  - (-1)^{j+2} a_{n,j+2}\widehat{s}_{j+2,j+1} - \sum_{k=j+3}^m (-1)^k a_{n,k}  \langle  s_{j+2,j+1} , s_{j+3,k} \widehat{\rangle}.\]
Now, we must do away with $a_{n,j+2}$. Using \eqref{s22} and \eqref{4.4}, we obtain
\[ \frac{(-1)^j a_{n,j}  - (-1)^{j+2} a_{n,j+2}\widehat{s}_{j+2,j+1} - \sum_{k=j+3}^m (-1)^k a_{n,k}  \langle  s_{j+2,j+1} , s_{j+3,k}  \widehat{\rangle}}{\widehat{s}_{j+2,j+1}} =
\]
\begin{align*}
\left((-1)^j p_{j+2,j+1} a_{ n,j} - (-1)^{j+2} a_{n,j+2}  - \sum_{k=j+3}^m \frac{(-1)^k|\langle  s_{j+2,j+1} , s_{j+3,k} \rangle|}{|s_{j+2,j+1}|} a_{n,k} \right)  \\
+ (-1)^j a_{n,j}\widehat{\tau}_{j+2,j+1} + (-1)^2 \sum_{k=j+3}^m (-1)^k a_{n,k} \langle  {\tau}_{j+2,j+1}, \langle s_{j+3,k}, s_{j+2,j+1} \rangle \widehat{\rangle}.
\end{align*}
This expression has the form of $\mathcal{L}_0$ in Lemma \ref{reduc}. Additionally,
$$ \frac{(-1)^j a_{n,j}  - (-1)^{j+2} a_{n,j+2}\widehat{s}_{j+2,j+1} - \sum_{k=j+3}^m (-1)^k a_{n,k}  \langle  s_{j+2,j+1} , s_{j+3,k} \widehat{\rangle}}{\widehat{s}_{j+2,j+1}\widetilde{w}_{n,j }}\in \mathcal{H}(\mathbb{C} \setminus \Delta_{j+2})$$
and because of \eqref{eq:6}, as $z\to \infty,$
$$ \frac{(-1)^j a_{n,j}  - (-1)^{j+2} a_{n,j+2}\widehat{s}_{j+2,j+1} - \sum_{k=j+3}^m (-1)^k a_{n,k}  \langle  s_{j+2,j+1} , s_{j+3,k} \widehat{\rangle}}{\widehat{s}_{j+2,j+1}\widetilde{w}_{n,j }}=\mathcal{O}\left(\frac{1}{z^{n-1}}\right).$$

From \eqref{eq:4} in Lemma \ref{reduc}, we obtain that for $\nu = 0,\ldots,n-3$
\[ 0 = \int x^{\nu} \left((-1)^j a_{ n,j} + (-1)^2 \sum_{k=j+3}^m (-1)^k a_{ n,k}(x) \langle s_{j+3,k}, s_{j+2,j+1}   \widehat{\rangle}(x) \right)\frac{d\tau_{j+2,j+1}(x)}{\widetilde{w}_{n,j}(x)}
\]
which implies that the function in parenthesis under the integral sign has at least $n-2$ sign changes in $\stackrel{\circ}{\Delta}_{j+2}$. In turn, it follows that there exists  a polynomial $\widetilde{w}_{ n,j+1 }, \deg \widetilde{w}_{ n,j+1 } = n-2$, whose zeros are simple and lie in $\stackrel{\circ}{\Delta}_{j+1} $ such that
\[ \frac{(-1)^ja_{ n,j} + (-1)^2 \sum_{k=j+3}^m (-1)^k a_{ n,k} \langle s_{j+3,k}, s_{j+2,j+1} \rangle \widehat{\rangle}}{\widetilde{w}_{n,j+1 }} \in \mathcal{H}(\mathbb{C} \setminus \Delta_{j+3})
\]
On the other hand, according to Definition \ref{MTPm2} and Lemma \ref{lem:2} with $r=j+2$
\begin{align*}
(\mathcal{A}_{ n,j}-\widehat{s}_{j+1,j+1}\mathcal{A}_{ n,j+1}+\widehat{s}_{j+2,j+1}\mathcal{A}_{ n,j+2})(z)=\mathcal{O}\left(\frac{1}{z}\right)\\
=\left((-1)^ja_{ n,j}  + (-1)^2 \sum_{k=j+3}^m (-1)^k a_{ n,k}  \langle   s_{j+3,k}, s_{j+2,j+1}  \widehat{\rangle}\right)(z)
\end{align*}
Consequently,
\begin{align*}
\frac{(-1)^j a_{ n,j} + (-1)^2\sum_{k=j+3}^m (-1)^k a_{ n,k} \langle s_{j+3,k}, s_{j+2,j+1} \rangle \widehat{\rangle}}{\widetilde{w}_{n,j+1 }}=\mathcal{O}\left(\frac{1}{z^{n-1}}\right)
\end{align*}
Notice that $a_{ n,j+2}$ has been eliminated. If $j=m-3$ combining \cite[Lemma 2]{Bus} and \cite[Lemma 1]{Gon} we obtain that
\[\lim_{n\to \infty} \frac{a_{n,m-3}}{a_{n,m}} = \widehat{s}_{m,m-2}\]
uniformly on each compact subset of $\mathbb{C}\setminus \Delta_m.$
Otherwise, we have
\[\frac{(-1)^ja_{ n,j} + (-1)^2 a_{n,j+3}\widehat{s}_{j+3,j+1} + (-1)^2 \sum_{k=j+4}^m (-1)^k a_{ n,k} \langle s_{j+3,j+1}, s_{j+1,k} \rangle \widehat{\rangle}}{\widetilde{w}_{n,j+1 }}=\mathcal{O}\left(\frac{1}{z^{n-1}}\right),\]
and we are ready to eliminate $a_{n,j+3}$ dividing by $\widehat{s}_{j+3,j+1}$.

\medskip

In general, for fixed $j$, after $m-j-1$ reductions obtained applying Lemmas \ref{reduc} and \ref{lem:2}, we find that there exists a polynomial denoted $w_{ n,j}^*, \deg w_{ n,j}^* = n-m+j$, whose zeros are simple and lie in $\stackrel{\circ}{\Delta}_{m-1} $ such that
\begin{equation}
\label{last}
\frac{a_{ n,j} - a_{ n,m} \widehat{s}_{m,j+1}}{w_{n,j}^*} = \mathcal{O}\left(\frac{1}{z^{n-m+j+2}}\right) \in \mathcal{H}(\mathbb{C} \setminus \Delta_m), \qquad z \to \infty.
\end{equation}
Then, \cite[Lemma 2]{Bus} and \cite[Lemma 1]{Gon}, imply \eqref{Con01m}. Now, \eqref{Con00m} is an immediate consequence of \eqref{RM} and \eqref{Con01m}.

\medskip

Assuming that $j \in \{0,\ldots,m-1\}$, \eqref{last} implies that
\[\frac{a_{ n,j} - a_{ n,m} \widehat{s}_{m,j+1}}{\widehat{s}_{m,j+1} w_{n,j}^*} = \mathcal{O}\left(\frac{1}{z^{n-m+j+1}}\right) \in \mathcal{H}(\mathbb{C} \setminus \Delta_m), \qquad z \to \infty.\]
Since
\[\frac{a_{ n,j} - a_{ n,m} \widehat{s}_{m,j+1}}{\widehat{s}_{m,j+1}} = a_{ n,j} \widehat{\tau}_{m,j+1} - \left(a_{n,m} - \ell_{m,j+1}a_{n,j}\right),\]
and using \eqref{eq:4}
\[\int x^{\nu} a_{n,j}(x)  \frac{d\tau_{m,j+1}(x)}{w_{n,j}^*(x)} = 0, \qquad \nu = 0,\ldots, n-m+j-1.\]
Consequently, $a_{n,j}$ has at least $n-m+j$ sign changes in $\stackrel{\circ}{\Delta}_m$; therefore, $\deg a_{n,j} \geq n-m+j$. For $j \in \{0,\ldots,m-2\}$ it could occur that $m-j-1$ zeros of $a_{n,j}$ lie in $\mathbb{C} \setminus \Delta_m$.
\end{proof}

Although, there may be a certain amount (independent of $n$) of zeros of the polynomials $a_{n,0},\ldots,a_{n,m-2}$ that abandon $\Delta_m$, the next corollary shows that in that case they approach $\Delta_m$ as $n\to \infty$.

\begin{corollary}\label{cor:CTPm} Under the assumptions of Theorem \ref{CTPm} we have that the accumulation points of  the zeros of $a_{ n,j}, j=0,1,\cdots,  m-2$  are in $\Delta_m $.
\end{corollary}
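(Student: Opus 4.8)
The plan is to combine the locally uniform convergence established in Theorem~\ref{CTPm} with the elementary fact that the limit function $\widehat{s}_{m,j+1}$ has no zeros outside $\Delta_m$. I first recall why the latter holds. Arguing by induction on $k-l$ one checks that every ``product'' $s_{k,l}=\langle \sigma_k,\sigma_{k-1},\ldots,\sigma_l \rangle$ occurring in a Nikishin system is again a measure of constant sign: indeed $d s_{k,l}=\widehat{s}_{k-1,l}\,d\sigma_k$, and since $\mathrm{supp}(s_{k-1,l})\subset\Delta_{k-1}$ and $\Delta_{k-1},\Delta_k$ have at most one common point, the Cauchy transform $\widehat{s}_{k-1,l}$ is real and of constant sign on $\Delta_k$. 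In particular $s_{m,j+1}$ is a constant-sign measure supported in $\Delta_m$, so
\[
\widehat{s}_{m,j+1}(z)=\int\frac{d s_{m,j+1}(x)}{z-x}\ne 0\qquad\text{for every }z\in\mathbb{C}\setminus\Delta_m ,
\]
because for non-real $z$ the imaginary part of the integrand has constant sign, while for real $z\notin\Delta_m$ the factor $z-x$ keeps a constant sign over $x\in\mathrm{supp}(s_{m,j+1})$.

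Now fix $j\in\{0,\ldots,m-2\}$ and let $z_0\in\mathbb{C}\setminus\Delta_m$. Since $\widehat{s}_{m,j+1}$ is continuous and $\widehat{s}_{m,j+1}(z_0)\ne 0$, there are $r>0$ and $\varepsilon>0$ with $\overline{D}(z_0,r)\subset\mathbb{C}\setminus\Delta_m$ and $|\widehat{s}_{m,j+1}(z)|\ge\varepsilon$ on $\overline{D}(z_0,r)$. By Theorem~\ref{equiv} all zeros of $a_{n,m}$ lie in $\stackrel{\circ}{\Delta}_m$, so $a_{n,j}/a_{n,m}$ is holomorphic in a neighbourhood of $\overline{D}(z_0,r)$, and by \eqref{Con01m} it converges to $\widehat{s}_{m,j+1}$ uniformly on $\overline{D}(z_0,r)$. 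Hence there is $n_0$ such that $|a_{n,j}(z)/a_{n,m}(z)|\ge\varepsilon/2$ on $\overline{D}(z_0,r)$ for all $n\ge n_0$; in particular $a_{n,j}$ has no zero in $D(z_0,r)$ once $n\ge n_0$. Since each of the finitely many polynomials $a_{n,j}$ with $n<n_0$ contributes only finitely many zeros, $D(z_0,r)$ contains only finitely many zeros of the whole family, so $z_0$ is not an accumulation point of the zeros of $(a_{n,j})_{n\in\mathbb{N}}$. As $z_0\in\mathbb{C}\setminus\Delta_m$ was arbitrary, all accumulation points lie in $\Delta_m$.

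This is a Hurwitz-type normality argument, and I do not anticipate any genuine obstacle: the only point that requires a (routine) verification is the non-vanishing of $\widehat{s}_{m,j+1}$ in $\mathbb{C}\setminus\Delta_m$, which follows from the constant-sign property recalled above. One could instead invoke Hurwitz's theorem directly for the holomorphic functions $a_{n,j}/a_{n,m}$ on $\mathbb{C}\setminus\Delta_m$, but the explicit lower bound is shorter and self-contained.
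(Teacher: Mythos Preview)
Your proof is correct and rests on the same idea as the paper's: the uniform convergence \eqref{Con01m} together with the non-vanishing of $\widehat{s}_{m,j+1}$ on $\mathbb{C}\setminus\Delta_m$ forces the zeros of $a_{n,j}$ to leave any compact subset of $\mathbb{C}\setminus\Delta_m$ for large $n$. The only difference is in the implementation of this Hurwitz-type step: the paper invokes the argument principle, integrating the logarithmic derivative of $a_{n,j}/a_{n,m}$ around an arbitrary Jordan curve $\Gamma\subset\mathbb{C}\setminus\Delta_m$ and noting that the limit $\frac{1}{2\pi i}\int_\Gamma \widehat{s}_{m,j+1}'/\widehat{s}_{m,j+1}\,dz$ vanishes, whereas you use the more elementary route of a direct lower bound $|a_{n,j}/a_{n,m}|\ge\varepsilon/2$ on a small disk. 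Your version has the minor advantage of being entirely self-contained (and you spell out why $\widehat{s}_{m,j+1}\neq 0$, which the paper simply asserts); the paper's version has the advantage of treating a whole curve at once rather than point by point. Either way the argument is sound.
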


\begin{proof}
Let $\Gamma$ be an arbitrary simple closed Jordan curve contained in $\mathbb{C} \setminus \Delta_m$. Since $\widehat{s}_{m,j+1}$ is never equals zero on  this domain, the argument principle implies that
		\[\lim_{n\to  \infty}\frac{1}{2\pi i}\int_\Gamma \frac{\left(a_{n,j}(z)/a_{n,m}(z)\right)^\prime}{\left(a_{n,j}(z)/a_{n,m}(z)\right)} dz = \frac{1}{2\pi i}\int_\Gamma \frac{ \widehat{s}^{\prime}_{m,j+1}(z)}{\widehat{s}_{m,j+1}(z)} = 0.\]
		But the poles of $a_{n,j}/a_{n,m}$ all lie in $\Delta_m$; consequently, for all sufficiently large $n$ the zeros of these rational function must lie in the unbounded connected component of the complement of $\Gamma$. This means that as $n \to \infty$ the zeros of $a_{n,j}$ that may lie in $\mathbb{C} \setminus \Delta_m$ must accumulate on $\Delta_m$.
\end{proof}

\appendix

\section{The inverse cubic string and Hermite-Pad\'e approximation problems.} \label{inverse}

The goal of this Appendix is to show the connection between the inverse cubic string problem \eqref{CSP} and the Hermite-Pad\'e approximation  problem \ref{Nik2}.
Let us consider two solutions $\phi_1(y), \phi_2(y)$ to the initial
value problem associated to the boundary value problem \eqref{CSP} for the values of the spectral parameter $z$ and $w$, respectively.
Then, by elementary integration by parts,
\begin{equation*}
\int_{-1}^1 (\phi_{1,yyy} \phi_2+\phi_1 \phi_{2, yyy})(y)dy=B(\phi_1,\phi_2)(y)\big|_{-1}^1,
\end{equation*}
where $B(h_1, h_2)(y)=\left( h_{1, yy}h_2-h_{1,y}h_{2,y}+h_1h_{2, yy}\right) (y)$ is the \textit{bilinear concomitant} associated to the formally  skew-adjoint operator  $\frac{d^3}{dy^3}$.  For the cubic string, and the specified initial conditions at $y=-1$,
	this identity simplifies further to
	\begin{equation} \label{eq:bidentity}
	(z+w)\int_{-1}^1 (\phi_1 \phi_2 g)(y) dy=B(\phi_1, \phi_2)(y)\big|_{-1}^1,
	\end{equation}
	implying that the bilinear concomitant vanishes for $w=-z$.
Denoting $\phi(y;z)=\phi_1(y;z)$ and $\phi(y;-z)=\phi_2(y;-z)$ we
	are led to an important identity
	\begin{equation} \label{eq:duality}
	\phi_{yy}(1;z)\phi(1;-z)-\phi_y(1;z)\phi_y(1;-z)+\phi(1;z)\phi_{yy}(1;-z)=0.
	\end{equation}

\medskip

Let us interpret this identity with the help of
the Weyl (sometimes also called the Weyl-Titchmarsh) functions
\begin{equation*}
W(z)=\frac{\phi_y(1;z)}{\phi(1;z)}, \qquad Z(z)=\frac{\phi_{yy}(1;z)}{\phi(1;z)},
\end{equation*}
which are matrix elements of the resolvent of the operator defined by \eqref{CSP}.  In terms of $W(z)$ and $Z(z)$, \eqref{eq:duality} can be written as
\begin{equation}\label{eq:W1W2}
Z(z)-W(z)W(-z)+Z(-z)=0,
\end{equation}
indicating a relation between these two Weyl functions.  In \cite{jacek}
	it was proven that $W(z), Z(z)$ admit the following spectral representations
	\begin{equation} \label{eq:spectralrep}
	\frac{W(z)}{z}=\int \frac{d\mu(x)}{z-x}, \qquad \frac{Z(z)}{z}=\int \frac{d\nu(x)}{z-x},
	\end{equation}
	where $d\nu(x)= (x\int \frac{d\mu(y)}{x+y}) \, d\mu(x)$.   The proof in \cite{jacek} is carried out when
	$g(y)$ is a finite, discrete measure, and in this special case $d\mu(x)=\sum_{j=0}^N b_j\delta_{z_j}, b_j>0 $ where $z_0=0$, and $z_j>0$ are
	the eigenvalues of the cubic string boundary value problem \eqref{CSP}.

\medskip

The approximation problem used in \cite{jacek}  to solve the
inverse problem for the cubic string consists in finding, for each $n \in \mathbb{N} = \{1,2,\ldots\},$  polynomials $(\widehat{P}_n,P_n, Q_n)$ with $\deg \widehat{P}_n\leq n$, $\deg P_n\leq n$, and  $\deg Q_n \leq n$,  not all identically equal to zero and normalized by the conditions $\widehat{P}_n(0)=0, \, \, P_n(0)=1$, which satisfy:
	\begin{align}
	\widehat{P}_n(z)-P_n(z)W(-z)+Q_n(z)Z(-z)&=\mathcal{O}(1/z^{n+1})\label{JAPI*},\\
	P_n(z)-Q_n(z)W(z)&=\mathcal{O}(1), \label{JAPIIa*}\\
	\widehat{P}_n(z)-Q_n(z)Z(z)&=\mathcal{O}(1).  \label{JAPIIb*}
	\end{align}

Due to (\ref{eq:W1W2}), equation (\ref{JAPIIb*}) is redundant and it suffices to consider equations (\ref{JAPI*}) and (\ref{JAPIIa*}).

\medskip

From  (\ref{eq:spectralrep}) it is clear
that there are two natural measures in the problem, $d\alpha(x)=
d\mu(x)$ and $d\beta(x)=x d\mu(x)$.  However, it is also
convenient to observe that in addition there are
two \textit{reflected with respect to the origin} measures
$d\alpha^*, d\beta^*$.  For example $d\alpha^*=\sum_{j=0}^N b_j\delta_{-z_j}$.  The advantage of using reflected measures is that
the final formulas have only one type of kernel, namely the Cauchy kernel $\frac{1}{x-y}$, rather than both the Cauchy and Stieltjes $\frac{1}{x+y}$ kernels.  In addition, having in mind a connection to Nikishin systems,
we point out that the supports of $d\beta$ and $d\alpha^*$ are disjoint.

\medskip

The spectral representations of $W$ and $Z$ imply
	\begin{align*}
	\frac{W(z)}{z}&=\int \frac{d\alpha(x)}{z-x}, &\frac{Z(z)}{z}&=\int \frac{d\beta(x)}{z-x}\int \frac{d\alpha^*(y)}{x-y},\\
	\frac{W(-z)}{z}&=\int \frac{d\alpha^*(y)}{z-x}, & \frac{-Z(-z)}{z}&=\int \frac{d\beta^*(y)}{z-y}\int \frac{d\alpha(x)}{y-x},
	\end{align*}
which allows one to rewrite the approximation problem using a notation more convenient for our purpose.

\medskip

To this end we set
	\begin{align*}
	d\lambda_{1}(x):=d\alpha^*(x), \quad \qquad  d\lambda_{2}(x):=d\beta(x)
	\end{align*}
	and
	\begin{align*}
	&\widehat{\lambda}_{1,1}(z):=\int \frac{d\lambda_1(x)}{z-x}, \quad &\widehat{\lambda}_{1,2}(z):=\int \frac{d\lambda_1(x)}{z-x}\int \frac{d\lambda_2(y)}{x-y} , \\
	&\widehat{\lambda}_{2,2}(z):=\int \frac{d\lambda_2(x)}{z-x}, \quad &\widehat{\lambda}_{2,1}(z):=\int\frac{d\lambda_2(x)}{z-x}\int\frac{d\lambda_1(y)}{x-y}.
	\end{align*}
	Therefore, we have the two Nikishin systems, namely, $(\lambda_{1,1},\lambda_{1,2})=\mathcal{N}(\lambda_1, \lambda_2)$ and its reverse $(\lambda_{2,2},\lambda_{2,1})=\mathcal{N}(\lambda_2, \lambda_1)$.

\medskip
	
	Now,  $W(z)=\int \frac{z d\alpha(x)}{z-x}$ and from equation (\ref{JAPIIa*})
    $P_{n}^{n} = Q_{n}^{n}|\alpha|$, where $P_{n}^{n}$ (respectively $Q_{n}^{n}$) is the coefficient in $P_n$ at the power $z^{n}$  and $|\alpha|$ is the zeroth moment of $d\alpha$. Then
    \begin{align*}
    W(z)=\int d\alpha(x) + \int \frac{xd\alpha(x)}{z-x}
    =|\alpha| + \int \frac{d\lambda_2(x)}{z-x}
    =|\alpha|+\widehat{\lambda}_{2,2}(z),
    \end{align*}
    implying that equation (\ref{JAPIIa*}) takes now the form
 \begin{align}\label{W}
    \widetilde{P}_n(z)-Q_n(z)\widehat{\lambda}_{2,2}(z)=\mathcal{O}(1)
    \end{align}
    where $\widetilde{P}_n =P_{n}  -|\alpha|Q_{n}$. Notice that $\deg \widetilde{P}_n \le n-1$.

 \medskip

In the next step we will rephrase equation (\ref{JAPI*}).  First, after some elementary computations,  we obtain
    $$W(-z)=z\widehat{\lambda}_{1,1}(z), \qquad Z(-z)=z\widehat{\lambda}_{1,1}(z)|\alpha|+z\widehat{\lambda}_{1,2}(z).$$ Morever,
   since $\widehat{P}_n(0) = 0$, we can write $\widehat{P}_n(z)=z {P}_n^*(z)$, where $\deg P_n^* \leq n-1$, and \eqref{JAPI*} reads
\begin{align}\label{WZ}
   z\left( {{P}^*_n}(z)- \widetilde{P}_n(z)\widehat{\lambda}_{1,1}(z)+Q_n(z)\widehat{\lambda}_{1,2}(z)\right) =\mathcal{O}(1/z^{n+1}).
   \end{align}

    From (\ref{W}) and (\ref{WZ})
     one sees that equations \eqref{JAPI*}, \eqref{JAPIIa*} and \eqref{JAPIIb*} are equivalent to
	\begin{align}
	 P^*_n(z)-\widetilde P_n(z)\widehat{\lambda}_{1,1}(z)+Q_n(z)\widehat{\lambda}_{1,2}(z)&=\mathcal{O}(1/z^{n+2}) \label{JJAPI*},\\
	\widetilde P_n(z)-Q_n(z)\widehat{\lambda}_{2,2}(z)&=\mathcal{O}(1), \label{JJAPIIa*}\\
	 P^*_n(z)-Q_n(z)\widehat{\lambda}_{2,1}(z)&=\mathcal{O}(1/z), \label{JJAPIIb*}
	\end{align}
	where $\deg Q_n\leq n, \deg \widetilde P_n\leq n-1,  P^*_n \leq n-1$. As well, with this new notation in place, \eqref{eq:W1W2}  takes the form
	\begin{equation} \label{eq:Plucker}
	\widehat{\lambda}_{2,1}(z)-\widehat{\lambda}_{2,2}(z)\widehat{\lambda}_{1,1}(z)+\widehat{\lambda}_{1,2}(z)=0.
	\end{equation}
	As before, \eqref{JJAPIIb*} follows from \eqref{eq:Plucker}, \eqref{JJAPI*}, and \eqref{JJAPIIa*}.  We note
	that \eqref{JJAPI*} is a type I Hermite-Pad\'e approximation
	to the direct Nikishin system $\mathcal{N}(\lambda_1, \lambda_2)$,
	while \eqref{JJAPIIa*} and \eqref{JJAPIIb*} is a type II Hermite-Pad\'e
	approximation to the reverse Nikishin system $\mathcal{N}(\lambda_2, \lambda_1)$.
	
\medskip
In the present paper, we  considered a slightly more symmetric
version of the approximation problem \eqref{JJAPI*}, \eqref{JJAPIIa*}.  After dividing equation (\ref{JJAPI*}) by $\widehat{\lambda}_{1,1}$ we get
\begin{equation}
\left( \frac{ P^*_n}{\widehat{\lambda}_{1,1}}-\widetilde P_n+Q_n\frac{\widehat{\lambda}_{1,2}}{\widehat{\lambda}_{1,1}}\right)(z) =\mathcal{O}(1/z^{n+1}).
\end{equation}
Using  formulas (\ref{s22}) and (\ref{4.4}), it follows that
\begin{equation}\label{tra3}
 \left( \frac{z P^*_n + Q_n|\lambda_{1,2}|}{|\lambda_{1,1}|} + b P^*_n  - \widetilde{P}_n +   P^*_n\widehat{\tau}_{1,1}-  \langle \tau_{1,1},\lambda_{2,1} \widehat{\rangle}Q_n\right) (z)=\mathcal{O}(1/z^{n+1}),
 \end{equation}
 where $b \in \mathbb{R}$.

\medskip
Set $\sigma_1:=\tau_{1,1}$ and $\sigma_2:= \lambda_{2,1}$. Take
\[a_{n,2}:= - Q_n, \qquad   a_{n,1}= -  P^*_n, \qquad a_{n,0}= \frac{z P^*_n + Q_n|\lambda_{1,2}|}{|\lambda_{1,1}|} + b P^*_n  - \widetilde{P}_n.\]
From (\ref{JJAPIIb*}) and (\ref{eq:Plucker}) it is easy to check that  $\deg (zP^*_n + Q_{n}|\lambda_{1,2}|) \leq n-1$; consequently $\deg a_{n,0}\le n-1$.
Finally,  we can write (\ref{JJAPI*}) and (\ref{JJAPIIb*}) as
	\begin{align*}
	\left(a_{n,0}-a_{n,1}\widehat{s}_{1,1}+a_{n,2}\widehat{s}_{1,2}\right)(z)=\mathcal{O}(1/z^{n+1}), \\
	\left(a_{n,1}-a_{n,2}\widehat{s}_{2,2}\right)(z)=\mathcal{O}(1/z),
\end{align*}
with $\deg a_{n,0}\leq n-1, \deg a_{n,1} \leq n-1,$ and $\deg a_{n,2} \leq n,$ which are the same interpolation conditions of the Hermite-Pad\'e approximation problem  \eqref{JLS1}-\eqref{JLS2}.

\end{document}